\documentclass[10pt,a4paper]{article}
\usepackage{amsmath,amsxtra,amssymb,latexsym,amscd,amsfonts,multicol,enumerate,ifthen,stmaryrd,indentfirst,amsthm,amstext}
\usepackage[mathscr]{eucal}
\usepackage{graphics, graphpap}
\usepackage[pdftex]{graphicx}
\usepackage{xspace}
\usepackage{array, tabularx, longtable}
\usepackage{multicol,color}
\usepackage{mathrsfs}
\usepackage{makeidx}
\usepackage{indentfirst}
\usepackage{subfigure}

\usepackage{hyperref}
\voffset =0in 
\hoffset =0in
\setlength{\evensidemargin}{1cm} 
\setlength{\oddsidemargin}{0.59cm} 
\setlength{\textwidth}{16cm} 
\setlength{\topmargin}{0cm} 
\setlength{\headheight}{0cm} 
\setlength{\headsep}{1,5cm} 
\setlength{\textheight}{23.0cm} 
\setlength{\baselineskip}{18truept}

\parskip 3pt
\abovedisplayskip =6pt plus 6pt minus 9pt
\abovedisplayshortskip =0pt plus 6pt minus 9pt 
\belowdisplayshortskip =\abovedisplayshortskip
\def\NoBlackBoxes{\overfullrule=0pt }
\NoBlackBoxes
\theoremstyle{plain}
\newtheorem{theorem}{Theorem}
\newtheorem{lemma}{Lemma}
\newtheorem{definition}{Definition}

\newtheorem{corollary}{Corollary}
\newcommand\R{{\mathbb R}}
\newcommand\E{{\textnormal E}}
\newcommand\Var{{\rm Var}}
\newcommand\PP{{\textnormal P}}
\title{ \bf The record method for two and three dimensional parameters random fields}
\author{ Jean-Marc Aza\"\i s and Viet-Hung Pham 
\\Institut de math\'{e}matiques de Toulouse \\ Universit\'{e} Paul Sabatier (Toulouse III) \\ 118, route de Narbonne \\ 31062 TOULOUSE Cedex 09}
\begin{document} 
\maketitle

\abstract
Let $S$ be a regular set of $\R^d$ and $X : S\rightarrow \R$ be Gaussian field with regular paths. In order to give bound to the tail of the distribution of the maximum, we use the record method of Mercadier. We present some new form in dimension 2 and extend it to dimension 3 using the result of the expectation of the absolute value of quadratic forms by Li and Wei. Comparison with other methods is conducted.

\textbf{Key-words:} Stochastic processes, Gaussian fields, Rice formula, distribution of the maximum.\\
\indent \textbf{Classifications:} 60G15, 60G60, 60G70.
\section{Introduction}
The problem of computing the tail of the maximum has a lot of applications in spatial statistics, image processing, oceanography, genetics etc ..., see for example Cressie and Wikle \cite{7}. It is exactly solved only for about ten processes with parameter of dimension 1, see Aza\"\i s and Wschebor \cite{5} p4 for a complete list. In the other cases, one has to use some approximations. Several methods have been used, in particular
\begin{itemize}
 \item The tube method, Sun \cite{17}.
 \item Double sum method, Piterbarg \cite{14}.
 \item Euler characteristic method see, for example, Adler and Taylor \cite{1}.
 \item Rice or direct method, Aza\"\i s and Delmas \cite{3}, Aza\"\i s and Wschebor \cite{5}.
 \end{itemize}
With respect to these methods, the record method which is the main subject of this paper and which is detailed in Section \ref{se2} has the advantage of simplicity and also the advantage of giving a bound  which is non asymptotic:  it is true for every level and not for large $u$ only.

 It has been introduced for one-parameter random processes by Rychlik \cite{16} and extended to two-parameter random fields by Mercadier \cite{13} to study the tail of the maximum of smooth Gaussian random fields on rather regular sets.\\ 
 It has two version, one is an exact implicit formula : Theorem 2 in \cite{13} that is interesting for numerical purpose and that  will not  be considered  here; the other form is a bound for the tail, see inequality (\ref{re1}) hereunder. 
 
 This bound has the advantage of  its simplicity. In particular it avoids  the computation of the expectation of the absolute value of the Hessian determinant as  in the direct method of \cite{4} but it works only dimension 2. \\
 For practical applications, the dimensions 2 and 3 (for the parameter set) are the most relevant so there is a need of an extension to dimension 3 and this is done in Section 3 using results on quadratic forms by Li and Wei \cite{11}.\\
The bound also has the drawback of demanding a parameterization of the boundary. For example, if we consider the version of Aza\"\i s and Wschebor (\cite{5}, Theorem 9.5 ) of the result of Mercadier, under some mild conditions on the set $S \subset \R^2$ and on the Gaussian process $X$, we have
\begin{align}\label{re1}
\PP \{M_S \geq u \} \leq & \PP\{Y(O)\geq u\} + \int_0^L \E(|Y'(l)|\mid Y(l)=u)p_{Y(l)}(u)\, dl \notag \\
& + \int_S \E(|X''_{11}(t)^-X'_2(t)^+|\mid X(t)=u,\, X'_1(t)=0) p_{X(t),X'_1(t)}(u,0)\, dt,
\end{align}
where
\begin{itemize}
\item $M_S $ is the maximum of $X(t)$ on the set $S$.
\item 
 $Y(l)=X(\rho (l))$ with $\rho: \; [0,L]\rightarrow \partial S$ is a parameterization of the boundary $\partial S $ by its length.
\item $\displaystyle X''_{ij}=\frac{\partial^2 X}{\partial x_i \partial x_j}$.
\item $p_Z(x)$: the value of the density function of random vector $Z$ at point $x$.
\item $x^+=\sup (x,0)$, \; $x^-=\sup (-x,0)$.
\end{itemize}
The proof is based on considering the point with minimal ordinate (second coordinate) on the level curve. As we will see, this point can be considered as a ``record point''. \\
So the second direction of generalizations is to propose nicer and stronger forms of the inequality (\ref{re1}). This is done in Section \ref{se2}. The result on quadratic form is presented in Section \ref{se4} and some numerical experiment is presented in Section \ref{se5}.

\subsection*{Notation}

\begin{itemize}
\item $S$ is some rather regular set included in $\R^2$ or $\R^3$.  $\partial S$ is its  boundary; $\overset{\circ}{S}$ is its interior.
\item $M_S=\underset{s\in S}{\max}\, X(s)$ where $X(s)$ is some rather regular process.
\item $\sigma_i$ is the surface measure of dimension $i$. It can be defined as a Hausdorff measure.
\item $X', \; X''$ are the first and second derivatives of the process $ X(t)$. In particular if $\alpha $ is some direction then $X'_{\alpha}$ is the derivative along the direction $\alpha$.
\item $M \preceq 0$ means that the square matrix $M$ is semi-definite negative.
\item $S^{ +\epsilon}$ is the tube around $S$, i.e
$$S^{+\epsilon}=\{s\in \R^2:\, \mbox{dist}(s,S)\leq \epsilon\}.$$
 \item $d_H$ is the Hausdorff distance between sets, defined by
$$d_H(S,T)=\inf \{\epsilon:\; S \subset T^{+\epsilon},\, T\subset S^{+\epsilon}\}.$$
\item $\varphi(x)$ and $\Phi(x)$ are the density and distribution function of a
 standard normal variable.  \\$\overline{\Phi}(x)=1-\Phi(x)$.
\end{itemize}

\section{The record method in dimension 2 revisited}\label{se2}
We will work essentially under the following assumption: \\
 \textbf{Assumption 1:}
$\{X(t),\; t \in NS \subset \R^2\}$ is a Gaussian stationary field, defined  in a neighborhood $NS$ of $S$ with $\mathcal{C}^1$ paths and such that there exists some direction, that will be assumed (without loss of generality)  to be the direction 
 of the first coordinate, in which the second derivative $X''_{11}(t)$ exists.\\
 We assume moreover the following normalizing conditions that  can always  be obtained by a scaling
$$
\E(X(t))=0, \; \Var(X(t))=1, \; \Var {X'(t)}=I_2 .
 $$
 Finally we assume that $ \Var(X''_{11}(t))>1$ which is true as soon as the spectral measure of the process restricted to the first axis is not concentrated on two opposite atoms. \\

 In some cases we will assume in addition \\
\textbf{Assumption 2:}
$X(t)$ is isotropic, i.e $\textnormal{Cov}(X(s),X(t))=\rho(\|t-s\|^2)$,  with $\mathcal{C}^2$ paths and $S$ is a convex polygon. \\
 Under Assumption 1 and 2 plus some light additional hypotheses, the Euler Characteritic (EC) method \cite{1} gives
$$
\PP\{M_S \geq u \} = \PP_E (u) +\mbox{Rest},
$$
 with 
 $$
 \PP_E (u) = 
 \overline{\Phi}(u)+\frac{\sigma_1(\partial S)}{2\sqrt{2\pi}} \varphi (u)+\frac{\sigma_2(S)}{2\pi}u\varphi (u),
 $$
where the rest is super exponentially small.\\
The direct method gives \cite{4}
\begin{align}
\PP\{M_S \geq u \} & \leq \PP_M (u) = \overline{\Phi}(u)\displaystyle +\frac{\sigma_1(\partial S)}{2\sqrt{2\pi}} \int_u^{\infty} \left[ c\varphi (x\textrm{/}c)+x\Phi (x\textrm{/c})\right]\varphi (x)dx \notag \\
&\displaystyle + \frac{\sigma_2(S)}{2\pi } \int_u^{\infty} \left[x^2-1+\frac{\displaystyle (8\rho''(0) )^{3/2}\exp(-x^2.(24\rho''(0) -2)^{-1})}{ \sqrt{24\rho''(0) -2}}\right]\varphi(x)dx, \label{bou}
\end{align}
where $c=\sqrt{\textnormal{Var} (X''_{11})-1}=\sqrt{12\rho''(0)-1}$.\\
The record method gives \cite{13}
$$\PP\{M_S \geq u \} \leq \overline{\Phi}(u)+\frac{\sigma_1(\partial S)}{\sqrt{2\pi}} \varphi (u)+\frac{\sigma_2(S)}{2\pi }\left[ c\varphi (u\textrm{/}c)+u\Phi (u\textrm{/c})\right]\varphi (u).
$$
 A careful examination of these equations shows that the main terms are almost the same except that in the record method the coefficient 
of $\sigma_1(\partial S)$ is twice too large. When $S$ is a rectangle $[0,T_1]\times[0,T_2]$, it is easy to prove that this coefficient 2 can be removed, see for example Exercise 9.2 in \cite{5}.
\medskip \\
The goal of this section is to extend the result above to more general sets and to fields satisfying Assumption 1 only.  The main result of this section is the following 
 %
\begin{theorem}\label{th1} Let $X$ satisfy the Assumption 1 and suppose that $S$ is the Hausdorff limit of connected polygons $S_n$. Then,
\begin{equation}\label{bou1}
\PP \{M_S \geq u\} \leq \overline{\Phi}(u)+ \frac{\liminf_n \sigma_1(\partial S_n) \varphi (u)}{2\sqrt{2\pi}}+\frac{\sigma_2(S)}{2\pi }\left[ c\varphi (u\textrm{/}c)+u\Phi (u\textrm{/c})\right]\varphi (u),
\end{equation}
where $c=\sqrt{\textnormal{Var} (X''_{11})-1}$.
\end{theorem}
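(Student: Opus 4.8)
The plan is to prove the bound first for a single connected polygon $S_n$ and then pass to the Hausdorff limit. The core of the argument is the record-point idea already described after inequality (\ref{re1}): on the event $\{M_S \geq u\}$ the level set $\{X = u\}$ is nonempty (up to the negligible event where the maximum is attained at an interior local maximum which we handle separately), and I would single out a canonical point on it, for instance the point of the level curve with minimal second coordinate. Such a point is either a point where the level curve is tangent to the horizontal direction inside $\overset{\circ}{S_n}$, or it lies on $\partial S_n$. This dichotomy is what produces the three terms on the right-hand side of (\ref{bou1}): the term $\overline{\Phi}(u)$ accounts for $X(O)\geq u$ at a reference point, the boundary term accounts for records occurring on $\partial S_n$, and the surface term accounts for interior records.

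First I would set up the Rice-type computation for the interior term. At an interior record point the field satisfies $X(t)=u$ and $X'_1(t)=0$, and minimality of the second coordinate forces $X''_{11}(t)\leq 0$ together with a sign condition on $X'_2(t)$. Applying the Rice formula to count such points gives exactly an expectation of the form $\E(|X''_{11}(t)^-X'_2(t)^+| \mid X(t)=u,\,X'_1(t)=0)\,p_{X(t),X'_1(t)}(u,0)$ integrated over $S_n$, as in the third line of (\ref{re1}). The improvement over (\ref{re1}) is in the explicit evaluation of this conditional expectation under the normalizing conditions of Assumption 1: using stationarity, the independence of $X'(t)$ from $(X(t),X''_{11}(t))$ that follows from $\Var(X'(t))=I_2$, and the factorization of the Gaussian density, the conditional law of $X'_2(t)$ is standard normal and independent, so $\E(X'_2(t)^+)=1/\sqrt{2\pi}$, while the remaining one-dimensional Gaussian computation involving $X''_{11}(t)^-$ given $X(t)=u$ produces the bracket $[\,c\varphi(u/c)+u\Phi(u/c)\,]$ with $c=\sqrt{\Var(X''_{11})-1}$. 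Multiplying by $p_{X(t),X'_1(t)}(u,0)=\varphi(u)\varphi(0)=\varphi(u)/\sqrt{2\pi}$ and integrating the constant integrand over $S_n$ yields the factor $\sigma_2(S_n)/(2\pi)$ times the bracket. The boundary term is handled by the same record idea restricted to each edge, giving $\int_{\partial S_n}\E(|X'_\nu|\mid X=u)\,p_{X}(u)$ type contributions; under the normalization this evaluates to $\sigma_1(\partial S_n)\varphi(u)/(2\sqrt{2\pi})$, the key point being the factor $1/2$ (rather than the factor present in the cruder Mercadier bound) obtained because only records with the correct orientation relative to the inward normal are counted.

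The second stage is the limiting argument. Having established (\ref{bou1}) for each polygon $S_n$, I would note that the surface term depends only on $\sigma_2(S_n)\to\sigma_2(S)$ by Hausdorff convergence, and that $\PP\{M_{S_n}\geq u\}\to \PP\{M_S\geq u\}$ (or at least $\limsup$ control in the right direction) because $S_n\to S$ in Hausdorff distance and $X$ has continuous paths, so the maximum is a continuous functional in a suitable sense. Taking the $\liminf$ in $n$ on the right-hand side and using that a convergent-in-probability sequence of maxima transfers the inequality to the limit gives the $\liminf_n \sigma_1(\partial S_n)$ form of the boundary coefficient; writing $\liminf$ rather than $\lim$ is exactly what lets us avoid assuming boundary-length convergence, which can fail for pathological approximating sequences.

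The main obstacle I anticipate is the rigorous justification of the Rice formula at the record points together with the sign bookkeeping. One must verify the Rice-formula hypotheses (no degenerate level points almost surely, integrability of the count) under Assumption 1, which is weaker than the $\mathcal{C}^2$ regularity often assumed since here only $X''_{11}$ is required to exist; care is needed because $X$ is only $\mathcal{C}^1$ overall, so the record-point characterization must be phrased purely in terms of $X$, $X'_1$, $X''_{11}$ and $X'_2$ along the first-coordinate direction rather than invoking a full Hessian. A secondary technical difficulty is controlling the contribution of the boundary and of corners of the polygons in the limit and ensuring that the interior-maximum event and boundary-tangency events are not double-counted; I would argue that the record point is almost surely unique and falls into exactly one category, so the three terms genuinely dominate $\PP\{M_{S_n}\geq u\}$ without overlap.
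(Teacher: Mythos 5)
Your overall strategy (a record point on the level curve, Rice formula for the interior and boundary contributions, then polygonal and smoothing limits) is indeed the paper's strategy, but there is a genuine gap at the very first step: for a general connected polygon the point $T$ of minimal ordinate on the level curve $\mathcal{C}_u$ is \emph{not} automatically a record point, so the sign conditions you assert ($X''_{11}(T)\le 0$ together with $X'_2(T)\ge 0$) can fail. Minimality only gives that $X-u$ has constant sign on a small lower half-disk at $T$; to conclude that this sign is negative (hence a local maximum along the horizontal line, $X''_{11}\le 0$ and $X'_2\ge 0$) one must rule out the configuration where $X>u$ strictly below $T$, i.e.\ a ``pocket'' of $S$ containing a piece of the excursion set below the lowest level-curve point, unreachable from $O$ by a path of non-decreasing ordinate. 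The paper isolates exactly the class of sets for which this cannot happen (the ``emptyable'' sets of Definition 1, see Figure \ref{fig1}), proves the bound there, and then treats a general connected polygon by decomposing it into the maximal emptyable piece $S_1$ containing $O$ and residual polygons $S_2^i$, on which the whole argument is rerun with the vertical direction reversed; the shared horizontal edges must then be discounted (the $-2\sigma_1(E)$ correction in (\ref{pt6})) for the perimeter bookkeeping to close, and one concludes by induction on the number of vertices. Your proposal has no substitute for this step, and without it the inequality for an arbitrary connected polygon is not established (allowing both sign configurations at $T$ would degrade the constants).

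Two smaller points. First, the factor $1/2$ in the boundary term is not obtained by ``counting only records with the correct orientation'': each edge $F_i$ contributes $\sigma_1(F_i)(1-\cos\theta_i)/(2\sqrt{2\pi})$, where $\theta_i$ is the angle between the upward direction along $F_i$ and the inward horizontal direction, and the clean coefficient $\sigma_1(\partial S)/(2\sqrt{2\pi})$ appears only after summing over all edges, because $\sum_i\sigma_1(F_i)\cos\theta_i=0$ (the oriented projection of a closed polygon on the first axis vanishes), as in (\ref{pt3}); for a single edge the contribution ranges over the whole interval from $0$ to $\sigma_1(F_i)/\sqrt{2\pi}$. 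Second, you correctly flag the validity of the Rice formula under Assumption 1 as an obstacle but leave it open; the paper resolves it by first proving the bound for a $\mathcal{C}^\infty$ regularization $X_\epsilon$ obtained by convolution and renormalization, and then letting $\epsilon\to 0$, so the Rice formula is never invoked for the merely $\mathcal{C}^1$ field itself.
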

{\bf Remark:} the choice of the direction of ordinates is arbitrary and is a consequence of the arbitrary choice of the the second derivative $X''_{11}$.
When the process $X(t)$ admits derivative in all direction, the choice that gives the sharpest bound consists in chosing as first axis, the direction $\alpha$ such that 
$ \Var(X''_{\alpha \alpha})$ is minimum.  \bigskip

Unfortunately the proof it is based on an exotic topological property of the set $S$ that will be called ''emptyable".
\begin{definition} The compact set $S$ is emptyable if there exists a point $O\in S$ which has minimal ordinate,
 and such that for every $s \in S$ there exists a continuous path inside $S$ from $O$ to $s$ with non decreasing ordinate.
\end{definition}
In other word, suppose that $S$ is filled with water and that gravity is in the usual direction; $S$ is emptyable if after making a small hole at $O$, all the water will empty out, see Figure \ref{fig1}. \\
\begin{figure}[!ht]
\begin{center}
\includegraphics[scale=0.65]{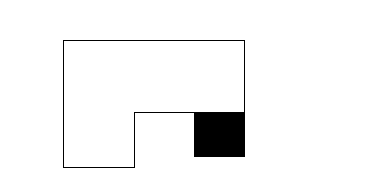}
\caption{Example of non-emptyable set. The non-emptyable part is displayed in black. } \label{fig1}
\end{center}
\end{figure}
\begin{proof} 
Step1 : Suppose for the moment that $X$ has $\mathcal{C}^{\infty}$ paths and that $S$ is an emptyable polygon. Considering the event $\{M_S \geq u\}$, we have
\begin{equation}
\label{pt1}
\PP \{M_S \geq u\} = \PP \{ X(O) \geq u\} + \PP \{ X(O) <u,M_S \geq u\}. 
\end{equation}
It is clear that if $X(O)< u$ and $M_S\geq u$, because $S$ is connected, the level curve
$$
\mathcal{C}_u=\{t \in S: \; X(t)=u\}
$$
is not empty, and there is at least one point $T$ on $\mathcal{C}_u$ with minimal ordinate. There are two possibilities: 
\begin{itemize}
\item $T$ is in the interior of $S$. In that case, suppose that there exists a point $s \in S$ with smaller ordinate than $T$ ($s_2<T_2$), such that $X(s)>u$. Then, due to the emptyable property, on the continuous path from $O$ to $s$ there would exist one point $s'$ with smaller ordinate than $T$, and with $X(s') =u$. 
 This is in contradiction with the definition of $T$. So we have proved that for every $s \in S$, $s_2<T_2$ we have $X(s) \leq u$. It is in the sense that $T$ can be considered as a record point. It implies that 
 $$
\{ X'_1(T)=0, \; X'_2(T) \geq 0, \; X''_{11}(T) \leq 0 \}. 
 $$
 The probability that there exists such a point is clearly bounded,  by the Markov inequality, by 
 $$
 \E\left(\textnormal{card}\{t\in S:\; X(t)=u, \; X'_1(t)=0, \; X'_2(t) \geq 0, \; X''_{11}(t) \leq 0\}\right). \notag \\
 $$
 Applying the Rice formula to the field $Z=(X,X'_1)$ from $\R^2$ to $\R^2$, we  get  that 
\begin{align}
&\quad \PP \{\exists \, t \in \overset{\circ}{S}: \, X(t)=u, \; t \; \mbox{has minimal ordinate on} \; \mathcal{C}_u \} \notag \\
\leq & \quad \displaystyle \int_S \E\left(|\det(Z'(t))| \mathbb{I}_{X'_2(t) \geq 0} \mathbb{I}_{X''_{11}(t) \leq 0}\mid Z(t)=(u,0)\right) \times p_{Z(t)}(u,0) \; dt \notag \\
= & \quad \displaystyle \sigma_2(S) \frac{\varphi(u)}{\sqrt{2\pi}}\; \E\left(X''^-_{11}(t)X^{\prime +}_2(t) \mid X(t)=u, X'_1(t)=0\right) \notag \\
= & \quad \displaystyle \sigma_2(S) \frac{\varphi(u)}{\sqrt{2\pi}} \; \E\left(X^{\prime +}_2(t)\right) \; \E\left(X''^-_{11}(t) \mid X(t)=u, X'_1(t)=0\right) \notag \\
= &\displaystyle \quad \sigma_2(S) \frac{\varphi (u)}{2\pi }\left[ c\varphi (u\textrm{/}c)+u\Phi (u\textrm{/}c)\right]. \label{pt2}
\end{align}
Note that the validity of the Rice formula holds true because the paths are of class $\cal{C}^\infty $ and that $X(t)$ and $X'_1(t)$ are independent. The computations above use some extra independences that are a consequence of the normalization of the process. The main point is that, under the conditioning 
$$ \det( Z'(t) ) = X''_{11}(t) X^{\prime}_2(t).$$
 \item $T$ is on the boundary of $S$ that is the union of the edges $(F_1,\ldots, F_n)$. It is with probability 1 not located on a vertex. Suppose that, without loss of generality, it belongs to $F_1$. Using the reasoning we have done in the preceding case, because of the emptyable property, it is easy to see that
$$ \{X(T)=u, \; X'_{\alpha}(T) \geq 0, \; X'_{\beta}(T) \leq 0\}, $$
where $\alpha$ is the upward direction on $F_1$ and $\beta$ is the inward horizontal direction.
Then, apply the Markov inequality and Rice formula in the edge $F_1$, 
 \begin{displaymath}
\begin{array}{rl}
& \PP \{\exists \, t \in F_1: \; X(t)=u, \; t\; \mbox{has minimal ordinate on} \; \mathcal{C}_u\} \\
\leq & \PP\{\exists \, t \in F_1:\; X(t)=u, \; X'_{\alpha}(t) \geq 0, \; X'_{\beta}(t) \leq 0\}\\
\leq & \E\left(\textnormal{card}\{t\in F_1:\; X(t)=u, \; X'_{\alpha}(t) \geq 0, \; X'_{\beta}(t) \leq 0\}\right) \\
= & \displaystyle \int_{F_1} \E\left(|X'_{\alpha}(t))| \mathbb{I}_{X'_{\alpha}(t) \geq 0}\mathbb{I}_{X'_{\beta}(t) \leq 0}\mid X(t)=(u)\right) \times p_{X(t)}(u) \; dt \\
= & \sigma_1(F_1) \varphi(u) \, \E\left(X'^+_{\alpha}(t)\, \mathbb{I}_{X'_{\beta}(t) \leq 0}\right).
\end{array}
\end{displaymath}
Denote by  $\theta_1$ the angle $(\alpha,\beta)$. $X'_{\beta}$ can be expressed as 
$$ \cos\theta_1\, X'_{\alpha}+\sin\theta_1\,Y, $$
with $Y$ is a standard normal variable that is independent with $X'_{\alpha}$. Then
\begin{displaymath}
\begin{array}{rl}
 &\E(X'^+_{\alpha}(t)\, \mathbb{I}_{X'_{\beta}(t) \leq 0})\\
=&\E(X'^+_{\alpha}\, \mathbb{I}_{\cos \theta_1 X'_{\alpha}+\sin \theta_1 Y \leq 0})\\
=&\displaystyle \frac{1-\cos\theta_1}{2\sqrt{2\pi}}.
\end{array}
\end{displaymath}
Summing up, the term corresponding to the boundary of $S$ is at most equal to
\begin{equation}\label{pt3}
\varphi(u) \sum_{i=1}^n \frac{(1-\cos\theta_i)\sigma_1(F_i)}{2\sqrt{2\pi}}=\frac{\varphi(u)\sigma_1(\partial S)}{2\sqrt{2\pi}},
\end{equation}
since $\displaystyle \sum_{i=1}^n \sigma_1(F_i) \, \cos\theta_i $ is just the length of the oriented projection of the boundary of $S$ on the $x$ -axis, so it is zero.
\end{itemize}
Hence, summing up (\ref{pt2}), (\ref{pt3}) and substituting into (\ref{pt1}), we obtain the  desired upper-bound in our particular case.\bigskip 

Step 2: Suppose now that $S$ is a general connected polygon such that the vertex $O$ with minimal ordinate is unique.
We define $S_1$ as the maximal emptyable subset of $S$ that contains $O$. It is easy to prove that $S_1$ is still a polygon with some horizontal edges and that $S\backslash S_1$ consists of several polygons with horizontal edges, say $S_2^1,\ldots,S_2^{n_2}$, see Figure \ref{fig2}.\\
\begin{figure}[!ht]
\begin{center}
\includegraphics[scale=0.5]{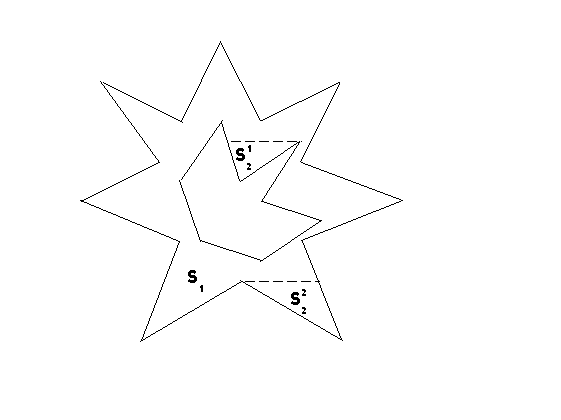}
\caption{ Example on construction of $S_1$. }\label{fig2}
\end{center}
\end{figure}
\\
So we write
\begin{equation}\label{pt4}
\PP\{M_S \geq u\}\leq \PP\{X(O)\geq u\}+\PP\{M_{S_1}\geq u,\; X(O)<u \}+\underset{i=1}{\overset{n_2}{\sum}}\PP\{M_{S_1}< u, \; M_{S_2^i}\geq u\}.
\end{equation}
Suppose for the moment that all the $S_2^i,\; i=1,\ldots,n$ are emptyable. Then, to give bounds to the event
$$\{M_{S_1}< u, \; M_{S_2^i}\geq u\},$$
we can apply the reasoning of the preceding proof but inverting the direction: in $S_2^i$, we search points on the level curve with {\bf maximum ordinate}. Let $E$ be the common edge of $S_1$ and $S_2^i$. Clearly, when $\{M_{S_1}< u, \; M_{S_2^i}\geq u\},$ the level curve is non empty and by the same arguments as in Theorem \ref{th1}, there exists $t\in S_2^i$ satisfying whether (except events with zero probability)
\begin{itemize}
\item $t$ is in the interior of $S^i_2$ and

$$\{ X(t)=u, \; X'_1(t)=0, \; X'_2(t) \leq 0, \; X''_{11}(t) \leq 0 \}.$$

From Markov inequality and Rice formula, this probability is at most equal to
\begin{equation}\label{pt5}
 \frac{\varphi (u)\sigma_2(S_2^i)}{2\pi }\left[ c\varphi (u\textrm{/}c)+u\Phi (u\textrm{/c})\right].
 \end{equation} 
\item $t$ lies on some edges of $S^i_2$. Note that $t$ can not belong to $E$. Then, as in Theorem \ref{th1}, we consider the event $t$ is on each edge and sum up the bounds to obtain
\begin{equation}\label{pt6}
\begin{array}{rl}
& \PP \left( \{\exists \, t \in \partial S_2^i: \, X(t)=u, \, t \, \mbox{has  maximal second ordinate on the level curve} \, \} \cap \{M_{S_1}<u\}\right)\\
\leq &\displaystyle \frac{\varphi(u)[\sigma_1(\partial S^i_2)-2\sigma_1(E)]}{2\sqrt{2\pi}}.
\end{array}
\end{equation}
\end{itemize} 
From (\ref{pt5}) and (\ref{pt6}) we have 
\begin{equation}\label{pt7}
\displaystyle \PP\{M_{S_1}< u, \; M_{S_2^i}\geq u\} \leq \frac{\varphi (u)\sigma_2(S_2^i)}{2\pi }\left[ c\varphi (u\textrm{/}c)+u\Phi (u\textrm{/c})\right]+\frac{\varphi(u)[\sigma_1(\partial S^i_2)-2\sigma_1(E)]}{2\sqrt{2\pi}}.
\end{equation}
Summing up all the bounds as in (\ref{pt7}), considering the upper bound for $\PP\{X(O)<u,\, M_{S_1}\geq u\}$ as in Theorem \ref{th1} and substituting into (\ref{pt4}), we get the result.\\
In the general case, when some $S_2^i$ is not emptyable, we can decompose $S_2^i$ as we did for $S$ and by induction. Since the number of vertices is decreasing, we get the result.
 
Step 3: Passing to the limit. The extension to process with non $\mathcal{C}^\infty$ paths is direct by an approximation argument. Let $\overline{X}_\epsilon(t)$ be the Gaussian field obtained by convolution of $X(t)$ with a size $\epsilon $ convolution kernel (for example a Gaussian density with variance $ \epsilon^2 I_2$). We can apply the preceding bound to the process
$$
X_ \epsilon (t) := \frac 1{\sqrt{\Var (\overline{X}_\epsilon(t)) }} \overline{X}_\epsilon\left( \Sigma ^{-1/2} _ \epsilon t\right) ,
$$
where $ \Sigma _ \epsilon = \Var(\overline{X}'_\epsilon(t))$.
Since $ \Var (\overline{X}_\epsilon(t)) \to 1$ and $ \Sigma _ \epsilon \to I_2$ , $\max_{t \in S } X_\epsilon (t) \to M_S$ and we are done. 

 The passage to the limit for $S_n$ tending to $ S$ is direct.
\end{proof}
\subsection*{Some examples}
\begin{itemize}
\item If $S$ is compact convex with non-empty interior then it is easy to construct a sequence of polygons $S_n$   converging to $S$  and such that   $\liminf_n \sigma_1(\partial S_n)=\sigma_1(\partial S)$, giving 
\begin{equation}\label{f:bound}
\PP\{M_S\geq u\}\leq \PP_R(u)=\overline{\Phi}(u)+ \frac{\sigma_1(\partial S)}{2\sqrt{2\pi}}\varphi(u)+ \frac{\sigma_2(S)}{2\pi }\left[ c\varphi (u\textrm{/}c)+u\Phi (u\textrm{/c})\right]\varphi (u).
\end{equation}
\item More generaly, if $S$  is compact and has a boundary that is piecewise-$\mathcal{C}^2$ except for a finite number of points and the closure of the interior of $S$ equals to $S$, we get  (\ref{f:bound}) by the same tools.

\item Let us now get rid of the condition $\overline{\overset{\circ}{S}}=S$  but still assuming the piecewise-$\mathcal{C}^2$ condition. Define the ``outer Minkowski content" of a closed subset $S \subset \R^2$ as (see \cite{8})
$$\rm{OMC}(S)= \underset{\epsilon \rightarrow 0}{\lim} \frac{\sigma_2(S^{+\epsilon}\setminus S)}{\epsilon},$$
whenever the limit exists (for more treatment in this subject, see \cite{2}). This definition of the perimeter differs from the quantity $\sigma_1(\partial S)$. A simple counter-example is a set corresponding to the preceding example with some ``whisker'' added. 
Using approximation by polygons, we get
\begin{equation} \label{star}
\PP\{M_S\geq u\}\leq \PP_R(u)=\overline{\Phi}(u)+ \frac{\rm{OMC}( S)}{2\sqrt{2\pi}}\varphi(u)+ \frac{\sigma_2(S)}{2\pi }\left[ c\varphi (u\textrm{/}c)+u\Phi (u\textrm{/c})\right]\varphi (u).
\end{equation}
\item The next generalization concerns  compact  $r$-convex sets  with a positive $r$ in the sense of \cite{8}.   These sets satisfy 
$$S=\underset{\overset{\circ}{B}(x,r) \cap S=\emptyset}{\bigcap}\R^2\setminus\overset{\circ}{B}(x,r).
$$
This  condition is slightly more general than the condition of having  positive reach in the sense of Federer \cite{9}.
Suppose in addition  that  $S$ satisfies the interior local connectivity property: there exists $\alpha_0 >0$ such that for all $0<\alpha<\alpha_0$ and for all $x\in S,\; \mbox{int}\left(B(x,\alpha)\cap S\right)$ is a non-empty connected set. Then we can construct a sequence of approximating polygons in the following way. 

Let $X_1,X_2,\ldots ,X_n$ be a random sample drawn from a uniform distribution on $S$ and $S_n$ be the r-convex hull of this sample, i.e
$$S_n=\underset{\overset{\circ}{B}(x,r) \cap \{X_1,X_2,\ldots , X_n\}=\emptyset}{\bigcap}\R^2\setminus\overset{\circ}{B}(x,r),$$
which can be approximated  by polygons with an arbitrary error.
By Theorem 6 of Cuevas et al \cite{8}, $S_n$ is a fully consistent estimator of $S$, it means that $d_H(S_n,S)$ and $d_H(\partial S_n,\partial S)$ tend to 0 as $n$ tends to infinity. This implies $\sigma_2(S_n)\rightarrow \sigma_2(S)$ and $\rm{OMC}(S_n)\rightarrow \rm{OMC}(S)$. Hence, we obtain (\ref{star}).
\item A complicated case: a  ``Swiss cheese''. Here, we consider an unit square and inside it, we remove a sequence of disjoint disks of radius $r_i$ such that $\displaystyle \pi \underset{i=1}
{\overset{\infty}{\sum}}r_i^2 <1$ to obtain the set $S$.
When $\displaystyle \underset{i=1}{\overset{\infty}{\sum}}r_i <\infty$ the bound (\ref{bou1}) makes sense directly.
But examples can be constructed from the Sierpinski carpet  (see Figure \ref{fig3}) such that $\displaystyle \underset{i=1}{\overset{\infty}{\sum}}r_i =\infty$ :
 divide the square into 9 subsquares of the same size and instead of removing  the central square, remove the disk inscribed in this square and  do the same procedure for the remaining 8 subsquares, ad infinitum.\\
\begin{figure}[!ht]
\begin{center}
\includegraphics[scale=0.3]{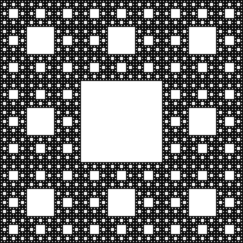}
\caption{ Sierpinski carpet (source: Wikipedia).} \label{fig3}
\end{center}
\end{figure}\\
 In our  case, 
$$\sum_{i=1}^{\infty}r_i^2=\frac{1}{4}\sum_{i=1}^{\infty}\frac{8^{i-1}}{3^{2i}} =\frac{1}{4}$$
This proves that the obtained set  $S$ has positive Lebesgue measure and is not fractal. We have on the other hand 
 $$
 \sum_{i=1}^{\infty}r_i=\frac{1}{2}\sum_{i=1}^{\infty}\frac{8^{i-1}}{3^{i}} = \infty.
 $$
Let $S_n$ be the set obtained after removing the n-th disk. Since $S \subset S_n$,
$$\PP \{M_S \geq u\} \leq \PP \{M_{S_n } \geq u\} \leq \overline{\Phi}(u)+ \frac{ \varphi (u)}{2\sqrt{2\pi}}(4+2\pi\underset{i=1}{\overset{n}{\sum}}r_i)+(1-\pi\underset{i=1}{\overset{n}{\sum}}r_i^2)\left[ c\varphi (u\textrm{/}c)+u\Phi (u\textrm{/c})\right]\varphi(u)/(2\pi).$$
Hence,
$$\PP \{M_S \geq u\} \leq \overline{\Phi}(u)+ \underset{n}{\min}\left[\frac{ \varphi (u)}{2\sqrt{2\pi}}(4+2\pi\underset{i=1}{\overset{n}{\sum}}r_i)+(1-\pi\underset{i=1}{\overset{n}{\sum}}r_i^2)\left[ c\varphi (u\textrm{/}c)+u\Phi (u\textrm{/c})\right]\varphi(u)/(2\pi)\right].$$
\end{itemize}

{\bf Remarks}: 
\begin{itemize}
 \item[1.] In comparison with other results, all the examples considered here are new. Firstly the conditions on the process are minimal and weaker than the ones of the other methods. Secondly the considered sets are not covered by any other methods. Even for the first example, because we do not assume that the number of irregular points is finite, which is needed, for example, for the convex set to be a stratified manifold as in \cite{1}.
\item[2.] Theorem \ref{th1} can be extended directly to non connected sets using sub-additivity 
 $$
 \PP\{M_{S_1\cup S_2}\geq u \} \leq \PP\{M_{S_1}\geq u \} + \PP\{M_{ S_2}\geq u \}.
 $$ 
 This implies that the coefficient of $\overline{\Phi}(u)$ in (\ref{bou1}) must be the number of components.

\end{itemize}
 
\subsection*{Is the bound sharp?}
\begin{itemize}
\item Under Assumption 2, Adler and Taylor \cite{1} show that
$$\underset{u\rightarrow +\infty}{\liminf} -2u^2\log |\PP\{M_S\geq u\} -\PP_E(u)|\geq 1+1/c^2.$$
From
$$0\leq \PP_R(u)-\PP_E(u)=\frac{\sigma_2(S)}{2\pi}\varphi(u)\left[c\varphi (u\textrm{/}c)-u\overline{\Phi} (u\textrm{/}c)\right]$$
and the elementary inequality for $x>0$,
$$\varphi(x)\left(\frac{1}{x}-\frac{1}{x^3}\right)<\overline{\Phi}(x)<\varphi(x)\left(\frac{1}{x}-\frac{1}{x^3}+\frac{3}{x^5}\right),$$ it is easy to see that
$$\underset{u\rightarrow +\infty}{\lim\inf} -2u^2\log (\PP_R(u)-\PP_E(u)) \geq 1+1/c^2.$$
 So the upper bound $\PP_R(u)$ is as sharp as $\PP_E(u)$ .
 \item Let $S$ be a compact and simply connected domain in $\R^2$ having a $\mathcal{C}^3$-piecewise boundary. Assume that all the discontinuity point are convex, in the sense that if we parametrize the boundary in the direction of positive rotation, then at each discontinuity point, the angle of the tangent has a positive discontinuity. Then, it is easy to see that the quantity
$$\kappa (S)= \underset{t\in S}{\sup} \underset{s\in S,\; s\neq t}{\sup} \frac{\rm{dist}(s-t,\mathcal{C}_t)}{\|s-t\|^2}$$
is finite, where $\rm{dist}$ is the Euclidean distance and $C_t$ is the cone generated by the set of directions
 $$\Biggl\{ \lambda \in \R^2 :\; \|\lambda\|=1,\, \exists s_n \in S \; \rm{such\; that}\; s_n\rightarrow t \; \rm{and} \; \frac{s_n-t}{\|s_n-t\|}\rightarrow \lambda \Biggr\}.$$
In order to apply the Theorem 8.12 in \cite{5}, besides the Assumption 1, we make some additional assumptions on the field $X$ such that it satisfies the conditions (A1)-(A5) page 185 in \cite{5}. Assume that 
\begin{itemize}
\item $X$ has $\mathcal{C}^3$ paths.
\item The covariance function $r(t)$ satisfies $|r(t)|\neq 1$ for all $t\neq 0$.
\item For all $s\neq t$, the distribution of $(X(s),X(t),X'(s),X'(t))$ does not degenerate.
\end{itemize} 
With these hypotheses, we can see that
\begin{itemize}
\item The conditions (A1)-(A3) are easily verified.
\item The condition (A4) which states that the maximum is attained at a single point, can be deduced from Proposition 6.11 in \cite{5} since for $s\neq t$, $(X(s),X(t),X'(s),X'(t))$ has a nondegenerate distribution. 
\item The condition (A5) which states that almost surely there is no point $t\in S$ such that $X'(t)=0$ and $\det(X''(t))=0$, can be deduced from Proposition 6.5 in \cite{5} applied to the process $X'(t)$.
\end{itemize}
Since all the required conditions are met, by Theorem 8.12 in \cite{5}, we have
\begin{equation}\label{f:1}
\underset{x\rightarrow +\infty}{\liminf}-2x^2 \log \big[\PP_M(x)-\PP\{M_S\geq x\}\big] \geq 1+ \underset{t\in S}{\inf}\frac{1}{\sigma_t^2+\kappa_t^2}>1,
\end{equation}
where
$$\sigma_t^2=\underset{s\in S\setminus\{t\}}{\sup}\frac{\rm{Var}\left(X(s)\mid X(t),X'(t)\right)}{(1-r(s,t))^2}$$
and
$$\kappa_t=\underset{s\in S\setminus\{t\}}{\sup}\frac{\rm{dist}\left(\frac{\partial}{\partial t} \it{r}(s,t),C_t\right)}{1-r(s,t)}.$$
Note that the condition $\kappa(S)$ is finite implies that $\kappa(t)$ is also finite for every $t\in S$. (\ref{f:1})  is true also for $\PP_R$, since as $x\rightarrow +\infty$, $\PP_R(x)$ is smaller than $\PP_M(x)$ (see Section \ref{se5} for the easy proof). As a consequence $\PP_R$ is super exponentially sharp.

\item Suppose that $S$ is a circle in $\R^2$. Then $\{X(t)\, :\; t\in S\}$ can be viewed as a periodic process on the line. In that case, it is easy to show, see for example Exercise 4.2 in \cite{5}, that as $u\rightarrow \infty$
$$
\PP(M_S\geq u)=\frac{\sigma_1(S)}{\sqrt{2\pi}}\varphi(u) + O(\varphi(u(1+\delta))=\frac{\rm{OMC}(S)}{2\sqrt{2\pi}}\varphi(u) + O(\varphi(u(1+\delta))
$$
for some $\delta >0$; while Theorem \ref{th1} gives with a standard approximation of the circle by polygons 
$$\PP(M_S\geq u) \leq P_R(u)=\overline{\Phi}(u)+ \frac{\rm{OMC}(S)}{2\sqrt{2\pi}}\varphi(u),
$$
which is too large. 
This shows that the bound $P_R$ is not always super exponentially  sharp.
\end{itemize}

\section{The record method in dimension 3}\label{se3}

  For example, with  the direct method,  some difficulties arise in dimension 3 because we need to compute 
$$\E | \det (X'' (t)|,$$
 under some conditional law. This can be conducted only in the isotropic case using random matrices theory, see \cite{4} and even in this case the result is complicated. In dimension 2, the record method is a trick that permits to spare a dimension in the size of the determinant we have to consider because the conditioning implies a factorization. For example in equation (\ref{pt2}) we have used the fact that 
 $$ \det( Z'(t) ) = X''_{11}(t) X^{\prime }_2(t),$$
 under the condition. In this section we will use the same kind of trick to pass from a 3,3 matrix to a 2,2 matrix and then a 2,2 determinant is just a quadratic form so we can use, to compute the expectation of its absolute value, the Fourier method of Berry and Dennis \cite{6} or Li and Wei \cite{11}. This computation is detailed in Section 4 and is one of the main contributions of the paper. \\
 Before stating the main theorem of this section, we recall the following lemma (see Chapter 5 of Prasolov and Sharygin \cite{15}) \bigskip
 \begin{lemma}\label{lema}
  Let $Oxyz$ be a trihedral. Denote by $a,\, b$ and $c$ the plane angles $\widehat{xOy},\, \widehat{yOz}$ and $\widehat{zOx}$, respectively. Denote by $A, \,B$ and $ C$ the angles between two faces containing the line $Oz, \,  Ox$ and $ Oy$, respectively. Then, 
\begin{itemize}
\item[a.] $\sin a : \sin A=\sin b : \sin B=\sin c : sin C$.
\item[b.] $\cos a=\cos b\cos c+\sin b\sin c \cos A$.
\end{itemize}
\end{lemma}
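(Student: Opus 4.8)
The plan is to realize the trihedral concretely with unit vectors and reduce both statements to elementary dot- and cross-product identities. First I would fix unit vectors $\mathbf{u}$, $\mathbf{v}$, $\mathbf{w}$ along the rays $Ox$, $Oy$, $Oz$. Since $a$, $b$, $c$ are the plane angles $\widehat{xOy}$, $\widehat{yOz}$, $\widehat{zOx}$, this gives immediately $\cos a=\mathbf{u}\cdot\mathbf{v}$, $\cos b=\mathbf{v}\cdot\mathbf{w}$ and $\cos c=\mathbf{w}\cdot\mathbf{u}$, with all three angles lying in $(0,\pi)$ so that the corresponding sines are positive.

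For part (b) I would compute the dihedral angle $A$ along $Oz$, i.e.\ the angle between the two faces $yOz$ and $zOx$, by projecting $\mathbf{u}$ and $\mathbf{v}$ onto the plane orthogonal to $\mathbf{w}$. The projections are $\mathbf{u}-(\cos c)\mathbf{w}$ and $\mathbf{v}-(\cos b)\mathbf{w}$; a direct expansion gives the inner product $\cos a-\cos b\cos c$, while $\|\mathbf{u}-(\cos c)\mathbf{w}\|^2=1-\cos^2 c$ and $\|\mathbf{v}-(\cos b)\mathbf{w}\|^2=1-\cos^2 b$ give the norms $\sin c$ and $\sin b$. Hence $\cos A=(\cos a-\cos b\cos c)/(\sin b\sin c)$, which is exactly the relation claimed in (b); the two remaining cosine laws then follow by the cyclic relabelling $(a,b,c,A,B,C)\to(b,c,a,B,C,A)$.

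For part (a) I would square the expression just obtained for $\cos A$ and form $\sin^2 A=1-\cos^2 A$. Clearing denominators yields $\sin^2 A\,\sin^2 b\,\sin^2 c=\sin^2 b\,\sin^2 c-(\cos a-\cos b\cos c)^2$, and expanding the right-hand side collapses to the fully symmetric Gram determinant
$$G=1-\cos^2 a-\cos^2 b-\cos^2 c+2\cos a\cos b\cos c,$$
which is none other than $\bigl(\mathbf{u}\cdot(\mathbf{v}\times\mathbf{w})\bigr)^2$. Consequently $\sin^2 A/\sin^2 a=G/(\sin^2 a\,\sin^2 b\,\sin^2 c)$ is symmetric in $a,b,c$, so it coincides with $\sin^2 B/\sin^2 b$ and $\sin^2 C/\sin^2 c$; taking positive square roots gives $\sin a:\sin A=\sin b:\sin B=\sin c:\sin C$.

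All the computations here are routine, so the only points demanding real care, and hence where I expect the bookkeeping to be most delicate, are twofold: keeping track of which dihedral angle is attached to which pair of faces, so that the pairing $a\leftrightarrow A$ (and its cyclic images) is the correct one entering the cosine law; and the branch conventions, namely checking that every plane and dihedral angle lies in $(0,\pi)$ so that all sines are nonnegative and the square roots in part (a) may be extracted with a single, uniform sign.
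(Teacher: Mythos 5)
Your argument is correct and complete: the projection computation for part (b) correctly identifies the dihedral angle $A$ along $Oz$ as the angle between the projections of $\mathbf{u}$ and $\mathbf{v}$ onto the plane orthogonal to $\mathbf{w}$, the pairing of $A$ with the opposite face angle $a=\widehat{xOy}$ matches the statement, and the reduction of part (a) to the symmetric Gram determinant $G=1-\cos^2a-\cos^2b-\cos^2c+2\cos a\cos b\cos c$ is the standard and cleanest way to get the spherical law of sines. The sign discussion is also handled properly, since for a nondegenerate trihedral all plane and dihedral angles lie in $(0,\pi)$.

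The only point of comparison to make is that the paper does not prove this lemma at all: it is recalled as a known fact with a citation to Chapter 5 of Prasolov and Sharygin, these being the classical spherical laws of cosines and sines for a trihedral angle. Your vector-algebra derivation is a legitimate self-contained proof of exactly the cited statements, so there is nothing to reconcile with the paper's (nonexistent) argument; if anything, your write-up could serve as an appendix making the paper self-contained. One very minor presentational remark: when you say $\sin^2A/\sin^2a$ is symmetric in $a,b,c$ you are implicitly using that the expression for $B$ and $C$ is obtained by the same computation with the roles of the axes permuted, which is worth one explicit sentence, but this is cosmetic rather than a gap.
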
 
Our main result is the following 
 
\begin{theorem}\label{th3}
Let $S$ be a compact and convex subset of $\R^3$  with non-empty  interior  and  let $X$ satisfy Assumption 1. Suppose,  in addition  that
 $X$ is  isotropic with respect to the first and second coordinate, i.e 
 $$\textnormal{Cov}(X(t_1,t_2,t_3);X(s_1,s_2,t_3))=\rho((t_1-s_1)^2+(t_2-s_2)^2) \mbox{ with}\; \rho \, \mbox{ of class}\;  \mathcal{C}^2.$$

 Then, for every real $u$, 
\begin{displaymath}
\begin{array}{rl}
\PP \{ M\geq u \} \leq & 1 -\Phi (u) + \displaystyle \frac{2\lambda(S)}{\sqrt{2\pi }} \varphi (u) + \frac{\sigma_2(S) \varphi (u)}{4\pi } \left[ \sqrt{12\rho''(0) -1}\varphi \left(\frac{u}{\sqrt{12\rho''(0) -1}} \right) + u \Phi \left(\frac{u}{\sqrt{12\rho''(0) -1}} \right)\right]\\
&+ \displaystyle \frac{\sigma_3(S)\varphi (u)}{(2\pi)^{3/2} } \left[u^2-1+\frac{\displaystyle (8\rho''(0))^{3/2}\exp\left(-u^2.(24\rho''(0)-2)^{-1}\right)}{\displaystyle \sqrt{24\rho''(0)-2}}\right],
\end{array}
\end{displaymath}
where $\lambda$ is the caliper diameter.
\end{theorem}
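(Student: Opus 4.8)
The plan is to follow the three–step scheme of Theorem \ref{th1}, but to sort the contributions according to the dimension of the stratum of $S$ on which the record point lies. First I would assume, as in Step 1 there, that $X$ has $\mathcal{C}^\infty$ paths and that $S$ is a convex polytope; the general statement then follows by the same convolution–smoothing and Hausdorff–approximation argument as in Step 3 of Theorem \ref{th1} (for convex $S$ inscribed or circumscribed polytopes converge in Hausdorff distance together with their volumes, surface areas and caliper diameters). The decisive simplification in the convex case is that $S$ is automatically emptyable with respect to the third coordinate: writing $e_3$ for the vertical unit vector and $O$ for a vertex of minimal third coordinate, every segment $[O,s]$ stays in $S$ and has non-decreasing third coordinate, so the delicate decomposition of Step 2 is unnecessary. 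Splitting $\PP\{M\geq u\}=\PP\{X(O)\geq u\}+\PP\{X(O)<u,\,M\geq u\}$ gives the first term $\overline{\Phi}(u)$, and on the second event I pick the point $T$ of minimal third coordinate on the level surface $\mathcal{C}_u=\{t\in S:X(t)=u\}$; emptyability forces $X\leq u$ strictly below $T$, so $T$ is a genuine record point lying in exactly one open stratum. The event $\{X(O)<u,\,M\geq u\}$ is thus contained in the union over the interior, the $2$–faces and the edges of ``there is a record point in that stratum'', and I bound each by the Markov inequality and a Rice formula of the matching dimension.

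For $T$ in the interior $\overset{\circ}{S}$ the record conditions are $X(T)=u$, $X'_1(T)=X'_2(T)=0$, $X'_3(T)\geq 0$ and $\bigl(X''_{ij}(T)\bigr)_{1\leq i,j\leq 2}\preceq 0$ (the level surface, written locally as $t_3=g(t_1,t_2)$, has a minimum at $T$). I apply the Rice formula to $Z=(X,X'_1,X'_2):\R^3\to\R^3$. Exactly as in (\ref{pt2}), under the conditioning $X'_1=X'_2=0$ the first row of $Z'$ is $(0,0,X'_3)$, so the determinant factorizes and spares one dimension:
$$\det\bigl(Z'(t)\bigr)=X'_3(t)\,\det\begin{pmatrix}X''_{11}(t)&X''_{12}(t)\\ X''_{21}(t)&X''_{22}(t)\end{pmatrix}.$$
Using the normalization ($\Var(X')=I_3$, value independent of gradient, first derivatives independent of second derivatives) one has $p_{Z(t)}(u,0,0)=\varphi(u)/(2\pi)$ and the conditional expectation factors as $\E(X'^{+}_3)\,\E\bigl(|\det H|\,\mathbb{I}_{H\preceq 0}\mid X(t)=u\bigr)$, where $H$ is the $2\times 2$ Hessian block. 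Since $\E(X'^{+}_3)=1/\sqrt{2\pi}$, this yields the prefactor $\sigma_3(S)\varphi(u)/(2\pi)^{3/2}$, and the surviving expectation of the absolute value of the quadratic form $\det H$ is precisely the quantity computed in Section \ref{se4} by the method of Li and Wei \cite{11}; it equals the bracket $u^2-1+\dots$ of the last term.

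For $T$ in the relative interior of a $2$–face $F$ with outward unit normal $\nu$, I run the two-dimensional record argument inside the plane of $F$. Let $\alpha$ be the in-face uphill direction (the normalized projection of $e_3$ onto $F$) and $\gamma=\nu\times\alpha$; since $\alpha\in\mathrm{span}(e_3,\nu)$ the direction $\gamma$ is genuinely horizontal, so by the horizontal isotropy $X''_{\gamma\gamma}$ has the law of $X''_{11}$ and the bracket $\sqrt{12\rho''(0)-1}\,\varphi(\cdot)+u\Phi(\cdot)$ reappears. The Rice formula applied to $(X|_F,X'_\gamma):F\to\R^2$ produces the in-face determinant $X''_{\gamma\gamma}X'_\alpha$ under the conditions $X'_\gamma=0$, $X'_\alpha\geq 0$, $X''_{\gamma\gamma}\leq 0$, together with a global-minimality condition from the normal direction. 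A first-order (Kuhn--Tucker) analysis of ``$T$ minimizes $t_3$ on $\mathcal{C}_u\cap S$ with the active constraint $\langle\,\cdot\,,\nu\rangle\leq 0$'' shows this extra condition is $X'_w\geq 0$ for the horizontal unit vector $w=\sin\psi\,\nu-\cos\psi\,\alpha$, $\psi$ being the angle between $\nu$ and $e_3$. As $\mathrm{corr}(X'_\alpha,X'_w)=-\cos\psi$, the same bivariate computation as in (\ref{pt3}) gives $\E(X'^{+}_\alpha\mathbb{I}_{X'_w\geq 0})=(1-\cos\psi)/(2\sqrt{2\pi})$, so the per-area contribution is $\varphi(u)(1-\cos\psi)/(4\pi)$ times the bracket. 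Summing over faces, the identity $\sum_F\sigma_2(F)\,\nu_F=0$ annihilates the $\cos\psi$ part just as the oriented projection vanished in Theorem \ref{th1}, leaving $\sum_F\sigma_2(F)=\sigma_2(S)$ and hence the third term with prefactor $\sigma_2(S)\varphi(u)/(4\pi)$.

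Finally, for $T$ on an edge $e$ I use the one-dimensional Rice formula along $e$: the record conditions come from requiring $T$ to be the lowest point of $\mathcal{C}_u$ inside the dihedral wedge of the two adjacent faces, which yields sign conditions on the derivatives pointing into those faces. Computing the resulting Gaussian expectation — a function of the dihedral angle and of the inclination of $e$ to the vertical, handled through the spherical trigonometry of Lemma \ref{lema} — and then summing over all edges through the appropriate integral-geometric identity for convex polytopes (the three-dimensional analogue of the vanishing oriented projection) produces the caliper-diameter term $2\lambda(S)\varphi(u)/\sqrt{2\pi}$; passing to the limit concludes the proof. I expect this edge analysis to be the genuinely delicate step: whereas the interior and face terms reduce cleanly to the factorization trick plus the quadratic-form computation of Section \ref{se4}, the edge contribution requires correctly identifying the record conditions at a dihedral corner and recognizing the edge-length sum as the caliper diameter. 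The quadratic-form expectation itself, though the analytic heart of the interior term, can simply be quoted from Section \ref{se4}.
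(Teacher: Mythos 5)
Your proposal follows essentially the same route as the paper: the same vertex/interior/face/edge decomposition according to the stratum containing the record point of minimal third coordinate, the same Markov--Rice bounds with the factorization $\det Z'=X'_3\det A$, the same appeal to the quadratic-form result of Section~\ref{se4} for the interior term, and (up to a relabeling of the in-face frame) the same choice of a horizontal in-face direction for the second derivative together with the vanishing oriented-projection identity $\sum_F\sigma_2(F)\,\nu_F=0$ for the face term. The only part you do not carry out is the explicit edge computation --- the trivariate Gaussian expectation $\E\big(X'^{+}_{\eta}\,\mathbb{I}_{X'_{\alpha}\leq 0}\,\mathbb{I}_{X'_{\beta}\leq 0}\big)$, the identity $\sum_i\sigma_1(F_i)(\pi-\theta_{3i})=4\pi\lambda(S)$, and the cancellation of the residual arctangent terms via Lemma~\ref{lema} --- which is precisely the page-long calculation the paper performs and which you correctly single out as the delicate step.
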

\begin{proof}
By the same limit argument as in Theorem \ref{th1}, we can assume that $X(t) $ has $\mathcal{C}^{\infty}$ paths and that  $S$ is a convex polyhedron. Let $O$ be the vertex of $S$ that has minimal third coordinate, we can assume also that this vertex is unique. It is clear that if $X(O) < u$ and $M_S>u$ then the level set 
$$\mathcal{C}(u)=\{t\in S: \; X(t)=u\}$$
is non empty and there exists  at least one  point $T$ having minimal third coordinate on this set. Then,
\begin{equation}
\begin{array}{rcl}
\PP\{M_S\geq u\}& = & \PP \{ X(O)\geq u \}+\PP\{X(O)<u,\, M_S\geq u\}\\
&\leq & \PP \{ X(O)\geq u \}+\PP\{\exists \,  T\in S: \, X(T)=u,\, T \, \mbox{has minimal third coordinate on}\; \mathcal{C}_u\}.
\end{array}
\end{equation}
Now, we consider three possibilities:\\
$\bullet $ Firstly, if $T$ is in the interior of $S$, then by the same arguments as in Theorem \ref{th1}, for all the point $s\in S$ with the third coordinate smaller than the one of $T$, $X(s)<X(T)$; 
it means that, at $T$, $X(t)$ has a local 
maximum
  with respect to the first and second coordinates and  is non-decreasing with respect to the third coordinate. Therefore, setting 
 \begin{displaymath}
A(t)= \left( 
\begin{array}{cc}
X''_{11}(t) & X''_{12}(t) \\
X''_{12}(t) & X''_{22}(t)
\end{array}
\right),
\end{displaymath}
we have 
 $$ 
 \{X(T)=u, \;X'_1(T)=0, \; X'_2(T)=0,\; A(T) \preceq 0,\; X'_3(T) \geq 0\}.
 $$
Then, apply the Rice formula to the field $Z=(X,X'_1,X'_2)$ and the Markov inequality, 
\begin{displaymath}
\begin{array}{rl}
&\displaystyle\PP\{\exists \,  T\in \overset{\circ}{S}: \, X(T)=u,\, T \, \mbox{has  minimal third coordinate on}\; \mathcal{C}_u\}\\
 \leq &\displaystyle \PP\{\exists \,  t \in \overset{\circ}{S}:\;X(t)=u, \; X'_1(t)=0, \; X'_2(t)= 0, \; X'_3(t)\geq 0,\; A(t) \preceq 0\}\\
\leq &\displaystyle \E\big(\textnormal{card}\{t\in \overset{\circ}{S}:\; X(t)=u, \; X'_1(t)=0, \; X'_2(t)= 0, \; X'_3(t)\geq 0,\; A(t) \preceq 0\}\big) \\
= &\displaystyle \E\big(\textnormal{card}\{t\in \overset{\circ}{S}:\;Z(t)=(u,0,0), \; X'_3(t) \geq 0, \; A(t) \preceq 0\}\big)\\
=& \displaystyle \int_{\overset{\circ}{S}} \E\left(|\det(Z'(t))|\mathbb{I}_{X'_3(t) \leq 0}\mathbb{I}_{A(t)\preceq 0} \mid Z(t)=(u,0,0)\right) \times p_{Z(t)}(u,0,0)\; dt.\\

\end{array}
\end{displaymath}
Under the condition $Z(t)=(u,0,0)$, it is clear that $\det(Z'(t))=X'_3(t)\, \det(A(t))$. So, we obtain the bound
$$ \sigma_3(S) \, \frac{\varphi(u)}{2\pi} \, \E\left(|\det(A(t))|.\mathbb{I}_{A(t) \preceq 0}X'^+_3(t) \mid Z(t)=(u,0,0)\right).
$$
From Corollary \ref{cor} of Section \ref{se4}, we know that 
$$
\E\left(|\det(A(t))|. \mathbb{I}_{A(t)\preceq 0}\mid Z(t)=(u,0,0) \right) \leq u^2-1+\frac{\displaystyle (8\rho''(0) )^{3/2}\exp\left(-u^2.(24\rho''(0) -2)^{-1}\right)}{\displaystyle \sqrt{24\rho''(0) -2}}.$$
Hence,
\begin{align}
&\PP\{\exists \,  T\in \overset{\circ}{S}: \, X(T)=u,\, T \, \mbox{has  minimal third coordinate on}\; \mathcal{C}_u\}  \notag
\\
\leq \;\;&
\label{t1}
 \frac{\sigma_3(S)\varphi (u)}{(2\pi)^{3/2} } \left[u^2-1+\frac{\displaystyle (8\rho''(0))^{3/2}\exp\left(-u^2.(24\rho''(0)-2)^{-1}\right)}{\displaystyle \sqrt{24\rho''(0)-2}}\right]. 
 \end{align}
$\bullet $ Secondly, if $T$ is in the interior of a face $S_1$, then, in this face, we choose the base $\overrightarrow{\alpha},\, \overrightarrow{\beta}$ such that $\overrightarrow{\alpha}$ is in the horizontal plane  $ 0\ t_1t_2$ such that along this vector, the second coordinate is not decreasing. Let us denote vector $\overrightarrow{\gamma}$ in the horizontal plane that is perpendicular to $\alpha$ and goes into $S$. It is easy to see that
$$\{ X(T)=u, \; X'_ {\alpha}(T) =0,\; X'_{\beta}(T) \geq 0,\; X'_{\gamma}(T) \leq 0,\; X''_{\alpha}(T) \leq 0 \}.$$
Apply Markov inequality and Rice formula to the field $Y(t)=(X(t),X'_{\alpha}(t))$, 
\begin{displaymath}
\begin{array}{rl}
&\displaystyle \PP\{\exists \,  T\in \overset{\circ}{S_1}: \, X(T)=u,\, T \, \mbox{has the minimal third ordinate on}\; \mathcal{C}_u\}\\
 \leq & \PP \{\exists \,  t \in \overset{\circ}{S_1}: \; X(t)=u, \; X'_ {\alpha}(t) =0,\; X'_{\beta}(t) \geq 0,\; X'_{\gamma}(t) \leq 0,\; X''_{\alpha} \leq 0 \}\\
\leq & \E (\textnormal{card}\{ t \in \overset{\circ}{S_1}: \; X(t)=u, \; X'_ {\alpha}(t) =0,\; X'_{\beta}(t) \geq 0,\; X'_{\gamma}(t) \leq 0,\; X''_{\alpha} \leq 0 \})\\
= & \displaystyle \int_{\overset{\circ}{S_1}} \E\left( |\det(Y'(t))| \mathbb{I}_{X'_{\beta}(t) \geq 0}\mathbb{I}_{X'_{\gamma}(t) \leq 0}\mathbb{I}_{X''_{\alpha}(t) \leq 0}\mid Y(t)=(u,0)\right)\, p_{Y(t)}(u,0)\, dt\\
= &\displaystyle \frac{\sigma_2(S_1)\varphi(u)}{\sqrt{2\pi}} \E\left( |X''^-_{\alpha}(t)|X'^+_{\beta}(t)\mathbb{I}_{X'_{\gamma}(t) \leq 0} \mid Y(t)=(u,0)\right).
\end{array} 
\end{displaymath}
As in Theorem \ref{th1}, it is clear that
\begin{align*}
\E\big( |X''^-_{\alpha}(t)| \mid Y(t)=(u,0)\big) & =\, \sqrt{12\rho''(0) -1}\varphi \left(\frac{u}{\sqrt{12\rho''(0) -1}} \right) + u \Phi \left(\frac{u}{\sqrt{12\rho''(0) -1}} \right),\\
 \E\big( X'^+_{\beta}(t)\mathbb{I}_{X'_{\gamma}(t) \leq 0} \mid Y(t) &=(u,0)\big)=\frac{1-\cos(\beta,\gamma)}{2\sqrt{2\pi}}.
\end{align*}
Observe that the angle between $\beta$ and $\gamma$ is the angle $\theta_1$ between the face $S_1$ and the horizontal plane, then the probability that there exists one point with minimal third coordinate on the level set and in the interior of the face $S_1$ is at most equal to
$$ \frac{\sigma_2(S_1) \varphi (u)(1-\cos\theta_1)}{4\pi } \left[ \sqrt{12\rho''(0) -1}\varphi \left(\frac{u}{\sqrt{12\rho''(0) -1}} \right) + u \Phi \left(\frac{u}{\sqrt{12\rho''(0) -1}} \right)\right] .$$
Taking the sum of all the bounds at each faces, observing that 
$$\sum_{i=1}^n \sigma_2(S_i)\cos\theta_i=0,$$ 
we have   the following upper bound for the probability  of having  a point  $T$  with minimal third coordinate  on the level set  and belonging  to  a face:
\begin{equation}\label{t2}
 \frac{\sigma_2(S) \varphi (u)}{4\pi } \left[ \sqrt{12\rho''(0) -1}\varphi \left(\frac{u}{\sqrt{12\rho''(0) -1}} \right) + u \Phi \left(\frac{u}{\sqrt{12\rho''(0) -1}} \right)\right] .
\end{equation}

$\bullet $ Thirdly, when $T$ belongs to one edge, for example $F_1$.
Let us define $\overrightarrow{\eta}$ is the upward direction on this edge, i.e such that along this vector, the third coordinate is not decreasing, and $\overrightarrow{\alpha}$ and $\overrightarrow{\beta}$ are the two horizontal directions that go inside two faces containing the edge. Then, 
$$ \{X(T)=u,\; X'_{\eta}(T) \geq 0,\; X'_{\alpha}(T) \leq 0,\; X'_{\beta}(T) \leq 0\} .$$
By Rice formula, the expectation of the number of the points in $F_1$ satisfying this condition is
\begin{displaymath}
\begin{array}{rl}
& \displaystyle \int_{F_1} \E\left(X'^+_{\eta}(t) \mathbb{I}_{X'_{\alpha}(t) \leq 0}\mathbb{I}_{X'_{\beta}(t) \leq 0} \mid X(t)=u\right)\times p_{X(t)}(u) \; dt\\
=& \displaystyle \sigma_1(F_1) \; \varphi(u) \; \E\left(X'^+_{\eta}(t) \mathbb{I}_{X'_{\alpha}(t) \leq 0}\mathbb{I}_{X'_{\beta}(t) \leq 0}\right). 
\end{array}
\end{displaymath}
Let $\overrightarrow{a}$ and $\overrightarrow{b}$ be two vectors in two faces containing the edge $F_1$ and perpendicular to $\overrightarrow{\eta}$; $\theta_1$ be the angle between $\overrightarrow{\alpha}$ and $\overrightarrow{\eta}$; $\theta_2$ be the angle between $\overrightarrow{\beta}$ and $\overrightarrow{\eta}$. It is clear that
$$X'_{\alpha}(t)=\cos\theta_1 \, X'_{\eta}(t)+ \sin\theta_1 \, X'_a(t) ,$$
$$ X'_{\beta}(t)=\cos\theta_2 \, X'_{\eta}(t)+\sin\theta_2 \, X'_b(t), $$
and $\textnormal{cov}(X'_a(t),X'_b(t))=\cos\theta_3$, where $\theta_3$ is the angle between two faces containing the edge $F_1$. Then,
\begin{displaymath}
\begin{aligned}
&\E\left(X'^+_{\eta} \mathbb{I}_{X'_{\alpha}(t) \leq 0}\mathbb{I}_{X'_{\beta}(t) \leq 0}\right)\\
=& \E\left(X'^+_{\eta} \mathbb{I}_{\{\cos\theta_1 \, X'_{\eta}(t)+ \sin\theta_1 \, X'_a(t) \leq 0\} }\mathbb{I}_{\{\cos\theta_2 \, X'_{\eta}(t)+\sin\theta_2 \, X'_b(t) \leq 0\}}\right)\\
=& \displaystyle \int_0^{\infty} x \, \varphi(x) \, F(x) \, dx,\\
\end{aligned}
\end{displaymath}
where
\begin{displaymath}
\begin{aligned}
F(x)= & \E\left( \mathbb{I}_{\{\cos\theta_1 \, X'_{\eta}(t)+ \sin\theta_1 \, X'_a(t) \leq 0\}}\mathbb{I}_{\{\cos\theta_2 \, X'_{\eta}(t)+\sin\theta_2 \, X'_b(t) \leq 0\}} \mid X'_{\eta}(t)=x\right)\\
= & \displaystyle \int_{-\infty}^{-\cot\theta_1 .x} \varphi(y) \, \Phi\left(\frac{- \cot\theta_2 .x - \cos\theta_3 .y}{\sin \theta_3}\right) \, dy.\\
\end{aligned}
\end{displaymath}
So, 
\begin{displaymath}
\begin{aligned}
F'(x)=&-\cot \theta_2 \; \varphi(-\cot\theta_2 \, .x) \, \Phi\left(\frac{-\cot\theta_1 \,.x+\cos \theta_3 \, \cot \theta_2\, . x}{\sin \theta_3}\right)\\
&-\cot \theta_1 \; \varphi(-\cot (\theta_1 \, x)) \, \Phi\left(\frac{-\cot\theta_2\, . x+\cos \theta_3 \, \cot \theta_1\, . x}{\sin \theta_3}\right).
\end{aligned}
\end{displaymath}
By integration by parts, 
\begin{displaymath}
\begin{aligned}
\displaystyle \int_0^{\infty} x\, \varphi(x)\, F(x)\, dx & = -\int_0^{\infty} F(x) \, d(\varphi(x))\\
&= \displaystyle F(0)\varphi(0)+ \int_0^{\infty} \varphi(x)\, F'(x)\, dx.
\end{aligned}
\end{displaymath}
It is easy to check that
 $$\int_0^{\infty} \varphi(x)\, \Phi(mx) \, dx=\frac{1}{4}+\frac{\arctan (m)}{2\pi},$$
$$\int_{-\infty}^0 \varphi(x)\, \Phi(mx) \, dx=\frac{1}{4}-\frac{\arctan (m)}{2\pi}.$$
From the above results, we have
\begin{displaymath}
\begin{aligned}
F(0)\varphi(0)& = \frac{1}{\sqrt{2\pi}} \int_{-\infty}^0 \varphi(y)\Phi\left( \frac{-\cos\theta_3 \, }{\sin\theta_3}y\right) \, dy\\
&= \frac{1}{(2\pi)^{3/2}}\left(\pi-\theta_3\right).
\end{aligned}
\end{displaymath}
and
\begin{displaymath}
\begin{aligned}
\int_0^{\infty} \varphi(x)\, F'(x)\, dx&= -\cot \theta_2 \,\int_0^{\infty} \varphi(x) \varphi(-\cot \theta_2\, . x) \, \Phi\left(\frac{-\cot \theta_1\, . x+\cos \theta_3 \, \cot \theta_2\, . x}{\sin \theta_3}\right) \, dx\\
&\quad -\cot \theta_1 \,\int_0^{\infty} \varphi(x) \varphi(-\cot \theta_1\, . x) \, \Phi\left(\frac{-\cot \theta_2\, .x+\cos \theta_3 \, \cot \theta_1\, . x}{\sin \theta_3}\right)\,dx\\
&= \frac{-\cos\theta_2}{\sqrt{2\pi}}\left(\frac{1}{4}+\frac{1}{2\pi}\arctan \left(\frac{-\sin\theta_2.\cot\theta_1+\cos\theta_3\cos\theta_2}{\sin\theta_3}\right)\right)\\
&\quad + \frac{-\cos\theta_1}{\sqrt{2\pi}}\left(\frac{1}{4}+\frac{1}{2\pi}\arctan \left(\frac{-\sin\theta_1\cot\theta_2+\cos\theta_3\cos\theta_1}{\sin\theta_3}\right)\right).
\end{aligned}
\end{displaymath}
Therefore, the probability that there exists one point with minimal third coordinate on the level set $\mathcal{C}(u)$ and belonging to $F_1$ is at most equal to
\begin{displaymath}
\begin{array}{rl}
 \displaystyle \frac{\sigma_1(F_1)(\pi-\theta_3)\varphi(u)}{(2\pi)^{3/2}} +\sigma_1(F_1)& \displaystyle \left[\frac{-\cos\theta_2}{\sqrt{2\pi}} \left( \frac{1}{4}+\frac{1}{2\pi}\arctan \left(\frac{-\sin\theta_2\cot\theta_1+\cos\theta_3\cos\theta_2}{\sin\theta_3}\right)\right) \right.\\
 &\displaystyle +\left. \frac{-\cos\theta_1}{\sqrt{2\pi}}\left(\frac{1}{4}+\frac{1}{2\pi}\arctan \left(\frac{-\sin\theta_1\cot\theta_2+\cos\theta_3\cos\theta_1}{\sin\theta_3}\right)\right)\right].
\end{array}
\end{displaymath}
Summing up all the terms at all the edges, we obtain the bound
\begin{displaymath}
\begin{aligned}
\displaystyle \varphi(u)\sum_{i=1}^n \frac{\sigma_1(F_i)(\pi-\theta_{3i})}{(2\pi)^{3/2}} & +\displaystyle \varphi(u)\sum_{i=1}^n \sigma_1(F_i)\left[ \frac{-\cos\theta_{2i}}{\sqrt{2\pi}}\left( \frac{1}{4}+\frac{1}{2\pi}\arctan \left(\frac{-\sin\theta_{2i}\cot\theta_{1i}+\cos\theta_{3i}\cos\theta_{2i}}{\sin\theta_{3i}}\right)\right) \right.\\
&\displaystyle \qquad + \left. \frac{-\cos\theta_{1i}}{\sqrt{2\pi}}\left(\frac{1}{4}+\frac{1}{2\pi}\arctan \left(\frac{-\sin\theta_{1i}\cot\theta_{2i}+\cos\theta_{3i}\cos\theta_{1i}}{\sin\theta_{3i}}\right)\right)\right].
\end{aligned}
\end{displaymath}
By definition, 
$$\sum_{i=1}^n \sigma_1(F_i) (\pi-\theta_{3i})=4\pi\lambda(S).$$
Now, we prove 
\begin{displaymath}
\begin{aligned}
I = & \displaystyle \sum_{i=1}^n l_i \left[ \frac{\cos\theta_{2i}}{\sqrt{2\pi}}\left(\frac{1}{4}+\frac{1}{2\pi}\arctan \left(\frac{-\sin\theta_{2i}\cot\theta_{1i}+\cos\theta_{3i}\cos\theta_{2i}}{\sin\theta_{3i}}\right)\right) \right.\\
&\qquad + \left. \frac{\cos\theta_{1i}}{\sqrt{2\pi}}\left(\frac{1}{4}+\frac{1}{2\pi}\arctan \left(\frac{-\sin\theta_{1i}\cot\theta_{2i}+\cos\theta_{3i}\cos\theta_{1i}}{\sin\theta_{3i}}\right)\right) \right]=0. 
\end{aligned} 
\end{displaymath}
Indeed, from Lemma \ref{lema}, we have
$$ \frac{-\sin\theta_1\cot\theta_2+\cos\theta_3\cos\theta_1}{\sin\theta_3}=\frac{-\cos h}{\sin h}, $$
where $h$ is the dihedral angle at $\overrightarrow{\alpha}$, i.e, the angle between the horizontal plane and the face containing $\overrightarrow{\alpha}$ and $\overrightarrow{\eta}$. Since $h$ is constant for each face,
$$ I= \sum_{S \in \{S_1,\ldots,S_k\}} \sum_{l \subset S} l\cos \theta_1 \left(\frac{1}{4}+\frac{1}{2\pi}\arctan\left(\frac{-\cos h}{\sin h}\right)\right)=0. $$
Therefore, we have the following upper bound for the probability  of having  a point  $T$  with minimal third coordinate  on the level set  and belonging  to  an edge:
\begin{equation}\label{t3}
\frac{2\lambda(S)\varphi(u)}{(2\pi)^{1/2}}.
\end{equation}
From (\ref{t1}), (\ref{t2}), (\ref{t3}) and the fact that $\PP\{X(O)>u\}=\overline{\Phi}(u)$, the result follows.
\end{proof}
\section{Computation of the absolute value of the determinant of the Hessian matrices}\label{se4}
As we see in the proof of Theorem \ref{th3}, we deal with the following 
$$ \E(|\det(X''(t))|\mathbb{I}_{X''(t)\preceq 0} \mid X(t)=u,\; X'_1(t)=0,\;X'_2(t)=0). $$
To evaluate this quantity, we have the following statement that is one of our main results in this paper:
\begin{theorem}
Let $X$ be a standard stationary isotropic centered two-dimensional Gaussian field. One has
\begin{equation}\label{eth3}
\E\left(|\det(X''(t))|\mid (X,X'_1,X'_2)(t)=(u,0,0) \right) \, = u^2-1+2\frac{\displaystyle (8\rho''(0) )^{3/2}\exp(-u^2.(24\rho''(0) -2)^{-1})}{\displaystyle \sqrt{24\rho''(0) -2}}.
\end{equation}
\end{theorem}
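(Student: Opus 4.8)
The plan is to compute the conditional law of the Hessian $X''(t)$ given $(X,X'_1,X'_2)(t)=(u,0,0)$ and then to evaluate $\E|\det(X''(t))|$ directly. First I would record the one-point joint law of $(X,X'_1,X'_2,X''_{11},X''_{22},X''_{12})$ from the spectral moments of an isotropic field with covariance $r(h)=\rho(\|h\|^2)$. Differentiating $r$ and imposing the normalisation gives $\rho'(0)=-1/2$, the cross-moments
$$\E(X''_{ij}X)=2\rho'(0)\delta_{ij}=-\delta_{ij},\qquad \E(X''_{ij}X'_k)=0,$$
and the fourth-order moments $\E(X''_{ij}X''_{kl})=4\rho''(0)(\delta_{ij}\delta_{kl}+\delta_{ik}\delta_{jl}+\delta_{il}\delta_{jk})$. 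In particular $X''$ is independent of $(X'_1,X'_2)$, so the conditioning reduces to conditioning on $X=u$ alone. Writing the entries as mean-shifted Gaussians, under $X=u$ one has
$$X''_{11}=-u+W_{11},\quad X''_{22}=-u+W_{22},\quad X''_{12}=W_{12},$$
where $(W_{11},W_{22},W_{12})$ is centred Gaussian with $\Var(W_{11})=\Var(W_{22})=12\rho''(0)-1$, $\mathrm{Cov}(W_{11},W_{22})=4\rho''(0)-1$, $\Var(W_{12})=4\rho''(0)$, and $W_{12}$ independent of $(W_{11},W_{22})$.

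The key simplification I would exploit is a change of variables adapted to the $2\times2$ determinant. Setting $\xi=\tfrac12(W_{11}+W_{22})$ and $\eta=\tfrac12(W_{11}-W_{22})$, a short covariance computation shows that $\xi$, $\eta$ and $c:=W_{12}$ are mutually independent, with $\Var(\xi)=8\rho''(0)-1$ and, crucially by isotropy, $\Var(\eta)=\Var(c)=4\rho''(0)$. Since $\det(X'')=(-u+W_{11})(-u+W_{22})-W_{12}^2$, expanding the product gives the difference of squares
$$\det(X''(t))=(\xi-u)^2-(\eta^2+c^2).$$
Because $\eta$ and $c$ are independent centred Gaussians of the \emph{same} variance $4\rho''(0)$, the variable $V:=\eta^2+c^2$ is exponentially distributed with mean $m:=8\rho''(0)$, and it is independent of $\xi$. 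This is the step that makes the absolute value tractable in closed form, and it is the only genuinely structural point of the argument; the remaining difficulty is mere bookkeeping of the spectral moments and verification of the independences.

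It then remains to evaluate $\E|(\xi-u)^2-V|$. I would first integrate out $V$ for fixed $\xi$: for $w\ge0$ and $V$ exponential of mean $m$, an elementary split of $\int_0^\infty|w-v|\,m^{-1}e^{-v/m}\,dv$ at $v=w$ gives $w-m+2m\,e^{-w/m}$. Applying this with $w=(\xi-u)^2$ and taking expectation over $\xi\sim N(0,\sigma^2)$, $\sigma^2=8\rho''(0)-1$, yields
$$\E|\det(X''(t))| = \E[(\xi-u)^2]-m+2m\,\E\!\left[e^{-(\xi-u)^2/m}\right].$$
Here $\E[(\xi-u)^2]=\sigma^2+u^2=u^2+8\rho''(0)-1$, so the first two terms combine to $u^2-1$, while the last term is a single Gaussian integral: completing the square gives $\E[e^{-(\xi-u)^2/m}]=\sqrt{m/(2\sigma^2+m)}\,\exp(-u^2/(2\sigma^2+m))$. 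Substituting $2\sigma^2+m=24\rho''(0)-2$ and $2m\sqrt{m}=2(8\rho''(0))^{3/2}$ produces exactly the claimed right-hand side. The positivity $\sigma^2\ge0$ needed along the way is automatic, being a consequence of the positive semi-definiteness of the covariance of $(W_{11},W_{22})$.
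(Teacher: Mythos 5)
Your proposal is correct: I checked the covariance algebra ($\Var(\xi)=8\rho''(0)-1$, $\Var(\eta)=\Var(W_{12})=4\rho''(0)$, mutual independence of $\xi,\eta,W_{12}$), the identity $\det X''=(\xi-u)^2-(\eta^2+W_{12}^2)$, the exponential law of $V=\eta^2+W_{12}^2$ with mean $m=8\rho''(0)$, the elementary formula $\E|w-V|=w-m+2me^{-w/m}$ for $w\ge 0$, and the final Gaussian integral with $2\sigma^2+m=24\rho''(0)-2$; everything matches the stated right-hand side. However, your route is genuinely different from the paper's. The paper keeps the determinant as a general quadratic-plus-linear functional $\langle Y,AY\rangle+\langle b,Y\rangle+u^2$ of the conditional Gaussian vector $Y=(Y_1,Y_2,Y_3)$, invokes Theorem 2.1 of Li and Wei to express $\E|\cdot|$ as the Fourier-type integral $\frac{2}{\pi}\int_0^\infty t^{-2}\bigl(1-F(t)-\overline{F}(t)\bigr)\,dt$, and evaluates it by the residue theorem (poles at $0$ and at $i(8\rho''(0))^{-1}$). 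You instead exploit the specific structure of the $2\times 2$ determinant: the rotation to $(\xi,\eta)$ turns it into a difference $(\xi-u)^2-V$ of two independent quantities, and the isotropy-forced coincidence $\Var(\eta)=\Var(W_{12})$ makes $V$ exponential, so the absolute value is handled by a one-line split of an exponential integral. Your argument is entirely elementary and self-contained (no complex analysis, no external quadratic-form lemma) and makes the Gaussian-times-exponential shape of the correction term transparent; the paper's method is heavier but is the one that generalizes to quadratic forms without this special product structure, which is why the authors present it as a contribution in its own right (Section~\ref{se4}). Both are valid proofs of the identity.
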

\begin{proof}
Under the condition, the vector $(X''_{11},X''_{12},X''_{22})$ has the same distribution with $(Y_1,Y_2,Y_3)+(-u,0,-u)$, where $(Y_1,Y_2,Y_3)$ is a centered Gaussian vector with the covariance matrix:\\
 $$\Sigma =
\left( \begin{array}{ccc}
 12\rho''(0) -1 & 0 & 4\rho''(0) -1\\
 0 & 4\rho''(0) & 0 \\
 4\rho''(0) -1& 0 & 12\rho''(0) -1 \\
\end{array} \right).$$
Then, the LHS in (\ref{eth3}) can be written as\\
$$\begin{array}{ll}
 & \E(|X''_{11}(t)X''_{22}(t)-X''_{12}(t)^2| \, \mid \, (X,X'_1,X'_2)(t)=(u,0,0) ) \\
=& \E(|(Y_1-u)(Y_3-u)-Y^2_2|)\\
=& \E(|Y_1Y_3-Y^2_2-u(Y_1+Y_3)+u^2|)\\
=& \E(|<Y,AY>+<b,Y>+u^2|),
\end{array}$$
where $A=\left(\begin{array}{ccc} 0 & 0 & \frac{1}{2}\\ 
0 & -1 & 0 \\
\frac{1}{2} & 0 & 0 \\
\end{array}\right)$ 
and $b=\left(\begin{array}{c} -u \\ 0 \\ -u \\ \end{array}\right)$.\\
Here, from Theorem 2.1 of \cite{11}, the expectation is equal to
$$\E(|<Y,AY>+<b,Y>+u^2|)=\frac{2}{\pi } \int_0^{\infty }t^{-2}(1-F(t)-\overline{F}(t))dt,$$
where $$F(t)=\frac{\exp(itu^2-2^{-1}t^2<b,(I-2it\Sigma A)^{-1}\Sigma b>)}{2\det(I-2it\Sigma A)^{1/2}}.$$
It is clear that 
$$F(t)=\frac{\exp(itu^2[1-it(16\rho''(0) -2)]^{-1})}{2(1+8it\rho''(0) )[1-it(16\rho''(0) -2)]^{1/2}},$$
and
$$\bar {F}(t)=\frac{\exp(-itu^2[1+it(16\rho''(0) -2)]^{-1})}{2(1-8it\rho''(0) )[1+it(16\rho''(0) -2)]^{1/2}}=F(-t).$$
So, the expectation is equal to
$$\begin{array}{rl}
\displaystyle \frac{2}{\pi} \int_0^{\infty }\frac{1}{t^2}(1-F(t)-\overline{F}(t))dt=& \displaystyle \textnormal{Re}( \frac{1}{\pi} \int_{-\infty}^{\infty }\frac{1}{t^2}(1-2.F(t))dt)\\

=& \displaystyle \textnormal{Re}(\frac{1}{\pi} \int_{-\infty}^{\infty }\frac{1}{t^2}(1-\frac{\displaystyle \exp(itu^2[1-it(16\rho''(0) -2)]^{-1})}{\displaystyle (1+8it\rho''(0) )[1-it(16\rho''(0) -2)]^{1/2}})dt).
\end{array}$$
Here, we apply the residue theorem to compute
$$\begin{array}{ll}
& \displaystyle \frac{1}{\pi} \int_{-\infty}^{\infty }\frac{1}{t^2}(1-\frac{\displaystyle \exp(itu^2[1-it(16\rho''(0) -2)]^{-1})}{\displaystyle (1+8it\rho''(0) )[1-it(16\rho''(0) -2)]^{1/2}})dt\\
=& 2i . \left(\text{sum of residues in upper half plane}\right) + i. \left(\text{sum of residues on x-axis} \right).
\end{array}$$
The residues come from two poles at $i.(8\rho''(0) )^{-1}$ and $0$ and we see that:\\
The residue at $0$ is equal to 
$$\frac{d}{dt}\left( 1-\frac{\exp(itu^2[1-it(16\rho''(0) -2)]^{-1})}{(1+8it\rho''(0) )[1-it(16\rho''(0) -2)]^{1/2}}\right) \bigg|_{t=0}=-i.u^2+i.$$
And the residue at $i.(8\rho''(0) )^{-1}$ is equal to
\begin{displaymath}
\begin{array}{l}
\displaystyle \frac{(1+8it\rho''(0) )[1-it(16\rho''(0) -2)]^{}-\exp(itu^2[1-it(16\rho''(0) -2)]^{-1})}{t^2.8i\rho''(0) .[1-it(16\rho''(0) -2)]^{1/2}} \bigg|_{t=i.(8\rho''(0) )^{-1}}\\
= \displaystyle \frac{(8\rho''(0) )^{3/2}\exp(-u^2.(24\rho''(0) -2)^{-1})}{\sqrt{24\rho''(0) -2}.i}.
\end{array}
\end{displaymath}
These two residues imply the result.
\end{proof}
We have the corollary
\begin{corollary} \label{cor}
Let $X$ be a standard stationary isotropic centered Gaussian field. One has
$$\E(|\det(X''(t))|. \mathbb{I}_{X''(t)\preceq 0}\mid (X,X'_1,X'_2)(t)=(u,0,0) ) \leq u^2-1+\frac{\displaystyle (8\rho''(0) )^{3/2}\exp(-u^2.(24\rho''(0) -2)^{-1})}{\displaystyle \sqrt{24\rho''(0) -2}}.$$
\end{corollary}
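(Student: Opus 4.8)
The plan is to deduce the corollary from the preceding theorem by means of the elementary identity $\det^+ = \tfrac12(|\det| + \det)$, combined with the fact that negative semi-definiteness forces the determinant to be non-negative. Concretely, I would first observe that whenever $X''(t) \preceq 0$ both eigenvalues of $X''(t)$ are non-positive, so their product $\det(X''(t))$ is non-negative. Hence, up to a set of probability zero, $\{X''(t) \preceq 0\} \subseteq \{\det(X''(t)) \geq 0\}$, and on this larger event $|\det(X''(t))| = \det(X''(t)) = \det(X''(t))^+$. This yields the pointwise domination
$$|\det(X''(t))|\,\mathbb{I}_{X''(t)\preceq 0} \leq |\det(X''(t))|\,\mathbb{I}_{\det(X''(t))\geq 0} = \det(X''(t))^+ = \tfrac12\big(|\det(X''(t))| + \det(X''(t))\big).$$

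Taking the conditional expectation given $(X,X'_1,X'_2)(t) = (u,0,0)$, it then suffices to evaluate the two terms on the right. The first is furnished directly by the theorem in the form $u^2 - 1 + 2K$, where $K = (8\rho''(0))^{3/2}\exp(-u^2(24\rho''(0)-2)^{-1})/\sqrt{24\rho''(0)-2}$ is the exponential term appearing in the corollary. For the second, I would compute the conditional mean of the determinant directly from the covariance matrix $\Sigma$ established in the theorem's proof: under the conditioning $X''(t)$ has the law of the symmetric matrix with entries $(Y_1-u,\,Y_2,\,Y_3-u)$, so $\det(X''(t)) = (Y_1-u)(Y_3-u) - Y_2^2$, and since $Y$ is centred with $\E(Y_1 Y_3) = 4\rho''(0) - 1$ and $\E(Y_2^2) = 4\rho''(0)$ one obtains $\E(\det(X''(t)) \mid (X,X'_1,X'_2)(t)=(u,0,0)) = (4\rho''(0)-1) + u^2 - 4\rho''(0) = u^2 - 1$.

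Combining the two, the conditional expectation of the right-hand side equals $\tfrac12\big((u^2-1+2K) + (u^2-1)\big) = u^2 - 1 + K$, which is exactly the claimed bound. I do not expect any serious obstacle here: the proof is essentially a two-line argument. The only point requiring a moment's attention is the set inclusion $\mathbb{I}_{X''(t)\preceq 0} \leq \mathbb{I}_{\det(X''(t))\geq 0}$, which replaces the indicator of the cone of negative semi-definite matrices by the indicator of a half-space in the determinant and lets the identity for $\det^+$ do the rest; the mean computation $\E(\det) = u^2-1$ is then purely routine once $\Sigma$ is in hand.
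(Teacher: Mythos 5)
Your proposal is correct and follows essentially the same route as the paper: the pointwise bound $|\det(X''(t))|\,\mathbb{I}_{X''(t)\preceq 0}\leq \tfrac12\bigl(|\det(X''(t))|+\det(X''(t))\bigr)$ combined with the theorem's value $u^2-1+2K$ for the conditional expectation of $|\det|$ and the computation $\E\bigl(\det(X''(t))\mid (X,X'_1,X'_2)(t)=(u,0,0)\bigr)=u^2-1$. You merely spell out the two steps the paper states without justification (the eigenvalue argument behind the indicator inequality and the direct evaluation of the conditional mean from $\Sigma$), both of which are done correctly.
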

\begin{proof}
The result follows from two observations
\begin{itemize}
\item $\displaystyle |\det(X''(t))|. \mathbb{I}_{X''(t)\preceq 0} \leq \frac{|\det(X''(t))|+\det(X''(t))}{2}.$
\item $\E(\det(X''(t))\mid (X,X'_1,X'_2)(t)=(u,0,0)) \; = \; u^2-1.$
\end{itemize}
\end{proof}

\section{Numerical comparison}\label{se5}
In this section, we compare the upper bounds given by the direct method and record method with the approximation given by the EC method.
For simplicity  we limit our attention  to the case  where  $S$ is the square $[0,T]^2$ and $X$ is a standard stationary isotropic centered Gaussian field with  covariance function $\rho(\|s-t\|^2)$. Note that only $\rho''(0)$ plays a role, the exact form of $\rho$ does not need to be specified. More precisely, we consider
 \begin{itemize}
 \item[1.] the approximation given by the EC method
 $$
 \PP_E (u) = 
 \overline{\Phi}(u)+\frac{2T}{\sqrt{2\pi}} \varphi (u)+\frac{T^2}{2\pi}u\varphi (u);
 $$
\item[2.] and the upper bound given by the direct method 
\begin{align}
  \PP_M (u) = \quad \overline{\Phi}(u)\displaystyle &+\frac{2T}{\sqrt{2\pi}} \int_u^{\infty} \left[ c\varphi (x\textrm{/}c)+x\Phi (x\textrm{/c})\right]\varphi (x)dx \notag \\
&\displaystyle + \frac{T^2}{2\pi } \int_u^{\infty} \left[x^2-1+\left(\frac{2(c^2+1)}{3}\right)^{3/2} \sqrt{\pi}\frac{ \varphi(x/c)}{c}\right]\varphi(x)dx, \notag
\end{align}
where $c=\sqrt{12\rho''(0)-1}$,\\

\item[3.] and the one given by the record method
$$\PP_R(u) = \overline{\Phi}(u)+\frac{2T}{\sqrt{2\pi}} \varphi (u)+\frac{T^2}{2\pi }\left[ c\varphi (u\textrm{/}c)+u\Phi (u\textrm{/c})\right]\varphi (u).
$$
\end{itemize}
It is easy to see  that $\PP_E$ is always less than $\PP_R$ and $\PP_M$. We will prove that $\PP_R(u) $ is smaller than $\PP_M (u) $ as $u$ is large. Indeed, if we compare the "dimension 1 terms" (corresponding to $\sigma_1(\partial S)$), we have
 \begin{displaymath}
\begin{array}{rl}
&  \int_u^{\infty} \left[ c\varphi (x\textrm{/}c)+x\Phi (x\textrm{/c})\right]\varphi (x)dx - \varphi (u)\\
= &  \int_u^{\infty} \left[ c\varphi (x\textrm{/}c)+x\Phi (x\textrm{/c})\right]\varphi (x)dx - \int_u^{\infty} x\varphi (x)dx\\
=&  \int_u^{\infty} \left[ c\varphi (x\textrm{/}c)-x\overline{\Phi} (x\textrm{/c})\right]\varphi (x)dx \geq 0,
\end{array}
\end{displaymath}
since when $x\geq 0$,
$$\frac{\varphi(x)}{x}\geq \overline{\Phi}(x).$$
So the term in the direct method is always larger when $u\geq 0$. \medskip  \\
 Let us consider now the two terms corresponding to $\sigma_2(S)$:
 
  \begin{itemize}
\item $A_d =  u\varphi(u) +  \int_u^{\infty} \left[ \left(\frac{2(c^2+1)}{3}\right)^{3/2} \sqrt{\pi}\frac{ \varphi(x/c)}{c}\right]\varphi(x)dx = u \varphi(u)  + \overline{A}_d.$

\item $A_r =    \left[ c\varphi (u\textrm{/}c)+u\Phi (u\textrm{/c})\right]\varphi (u)   =   u\varphi(u) +\overline{A}_r.$
\end{itemize}
It is easy to show that , as $u\to +\infty$,
$$
 \overline{A}_d= \int _u ^{\infty} \varphi \bigg( \frac x c\bigg) \varphi(x) dx = (const)  \overline{\Phi} \bigg( u \sqrt{ \frac{1+ c^2}{ c^2}}\bigg) \simeq (const)  u^{-1} \varphi \bigg( u \sqrt{ \frac{1+ c^2}{ c^2}}\bigg).
 $$
 and that 
 $$
  \overline{A}_r \simeq  (const)  u^{-2} \varphi \bigg( u \sqrt{ \frac{1+ c^2}{ c^2}}\bigg).
 $$
This shows that for  $u$ sufficiently large $ A_r$  is smaller than $ A_d$. \\
 
 The numerical comparison  is performed in Figure 4  for six different situations.  It shows that  the record method is always better than the direct method.  EC method and record method are very close, but it is not possible  to identify  the better  among those two since  $\PP_E$  can be smaller than the true value.

\begin{figure}[!ht]
\begin{center}
\subfigure[$T=1, \; \rho''(0)=0.25$]{\includegraphics[scale=0.4]{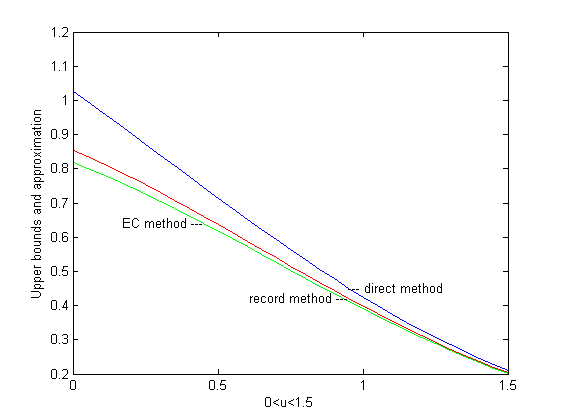}} 
\subfigure[$T=1, \; \rho''(0)=0.5$]{\includegraphics[scale=0.4]{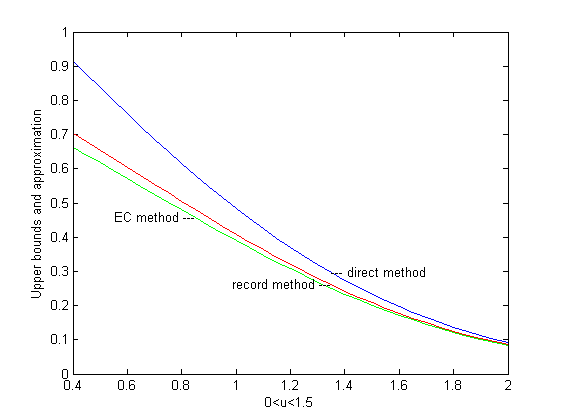}}
\subfigure[$T=2, \; \rho''(0)=0.5$]{\includegraphics[scale=0.4]{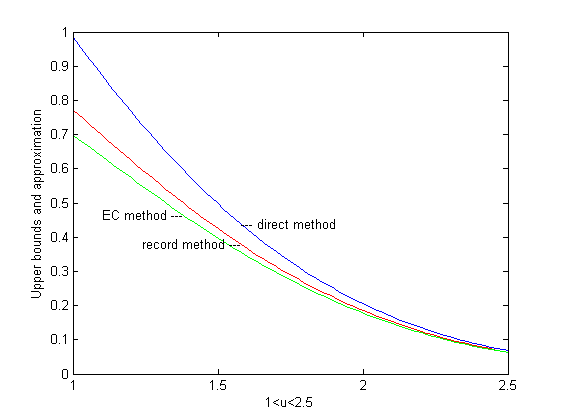}}
\subfigure[$T=2, \; \rho''(0)=1$]{\includegraphics[scale=0.4]{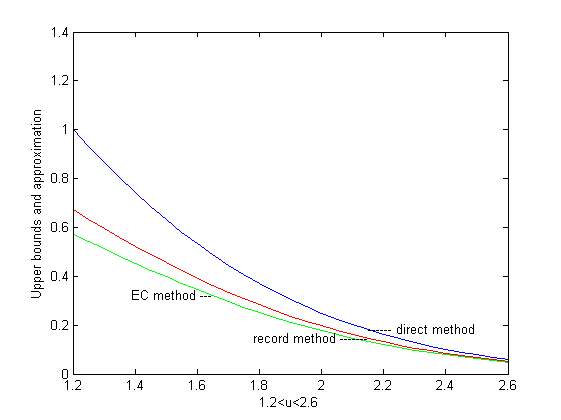}}
\subfigure[$T=4, \; \rho''(0)=2$]{\includegraphics[scale=0.4]{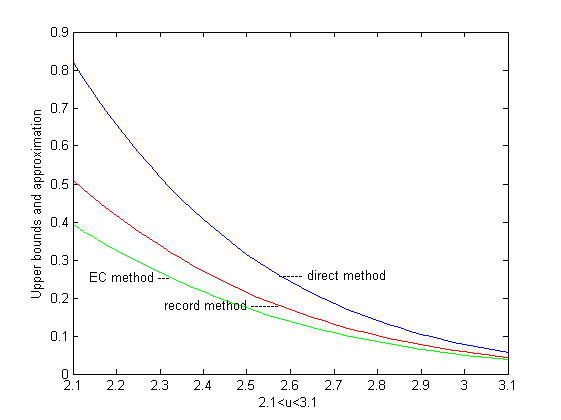}}
\subfigure[$T=0.25, \; \rho''(0)=0.5$]{\includegraphics[scale=0.4]{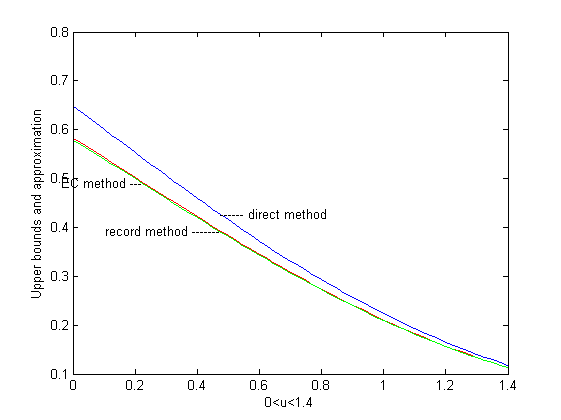}}
\caption {Comparison of the two bounds $\PP_R$ and $\PP_M$ and the approximation $\PP_E$ for several values of $\rho''(0)$ and $T$.}
\end{center}
\end{figure}

jean-marc.azais@math.univ-toulouse.fr\\
viet-hung.pham@math.univ-toulouse.fr

\begin{thebibliography}{99}
\bibitem[1]{1}{R.J. Adler and J. Taylor, \textit{Randoms fields and Geometry}, Springer, New York, 2007.}
\bibitem[2]{2}{L. Ambrosio, A. Colesanti and E. Villa, \textit{ Outer Minkowski content for some classes of closed sets}, Math. Ann. 342 (2008), no. 4, 727–748. }
\bibitem[3]{3}{J.M. Aza\"\i s and C. Delmas, \textit{Asymptotic expansions for the distribution of the maximum of Gaussian random fields}, Extremes 5 (2002), no. 2, 181-212.}
\bibitem[4]{4}{J.M. Aza\"\i s and M. Wschebor, \textit{ A general expression for the distribution of the maximum of a Gaussian field and the approximation of the tail}, Stochastic Process. Appl. 118 (2008), no. 7, 1190-1218.}
\bibitem[5]{5}{J.M. Aza\"\i s and M. Wschebor, \textit{Level sets and extrema of random processes and fields}, John Wiley and Sons, 2009.}
\bibitem[6]{6}{M.V Berry and M.R. Dennis, \textit{Phase singularities in isotropic random waves}, Proceedings of the Royal Society of London A 456 (2000), 2059-79.}
\bibitem[7]{7}{N. Cressie and C.K. Wikle, \textit{Statistics for Spatio-Temporal Data}, John Wiley and Sons, 2011}
\bibitem[8]{8}{A. Cuevas, R. Fraiman and B. Pateiro-L\'{o}pez, \textit{On statistical properties of sets fulfilling rolling-type conditions}, arXiv:1105.6239.}
\bibitem[9]{9}{H. Federer, \textit{ Curvature measures}, Trans. Amer. Math. Soc. 93 (1959), 418–481.}
\bibitem[10]{10}{Y.Fyodorov, \textit{Complexity of random energy landscapes, glass transition and absolute value of spectral determinant of random matrices},  Phys. Rev. Lett. 92 (2004), no. 24, 240601, 4 pp.}
\bibitem[11]{11}{ W. Li and A. Wei, \textit{Gaussian integrals involving absolute value functions}, IMS lecture Notes-Monograph Series, 2009.}
\bibitem[12]{12}{M.L. Mehta, \textit{Random matrices}, 3rd ed, Academic Press, San Diego, CA, 2004.}
\bibitem[13]{13}{C. Mercadier, \textit{Numerical bounds for the distribution of the maximum of one-and two-dimensional processes}, Adv. in Appl. Probab. 38 (2006), no. 1, 149-170.}
\bibitem[14]{14}{V.I. Piterbarg,\textit{ Asymptotic methods in the theory of Gaussian processes and fields}, Translated from the Russian by V. V. Piterbarg. Revised by the author. Translations of Mathematical Monographs, 148. American Mathematical Society, Providence, RI, 1996.}
\bibitem[15]{15}{V. Prasolov and I. Sharygin, \textit{Problems in Plane and Solid Geometry. v.2 Solid Geometry}, translated and edited by Dimitry Leites.}
\bibitem[16]{16}{I. Rychlik, \textit{New bounds for the first passage, wave-length and amplitude densities}, Stochastic Process. Appl. 34 (1990), no. 2, 313-339.}
\bibitem[17]{17}{J. Sun, \textit{Tail probabilities of the maxima of Gaussian random fields}, Ann. Prob. 21 (1993), 34-71.}
\bibitem[18]{18}{A. Takemura and S. Kuriki, \textit{On the equivalence of the tube and Euler characteristic methods for the distribution of the maximum of Gaussian fields over piecewise smooth domains}, Ann. APl. Prob. 12 (2002), 768-796.}
\end{thebibliography}
\end{document}